\newtheorem{theorem}{Theorem}[section]
\newtheorem{lemma}[theorem]{Lemma}
\newtheorem{proposition}[theorem]{Proposition}
\newtheorem{remark}[theorem]{Remark}
\newcommand{\Subsection}[1]{\subsection{ #1} ${}^{}$}
\newcounter{hypo}
\def\C{{\mathbb C}}
\def\N{{\mathbb N}} 
\def\R{{\mathbb R}}
\def\S{{\mathbb S}}
\def\CA{\mathcal {A}}
\def\CM{\mathcal {M}}
\def\CO{\mathcal {O}}
\def\re{\mathop{\rm Re}\nolimits}
 \def\im{\mathop{\rm Im}\nolimits}
\def\supp{\mathop {\rm supp}\nolimits}
\def\Hess{\mathop{\rm Hess}\nolimits}
\def\<{\langle}
\def\>{\rangle}
\newcommand{\fract}[2]{\genfrac{}{}{0pt}{}{\scriptstyle #1}{\scriptstyle #2}}
\author{Jean-Fran\c{c}ois Bony}
\address{\newline Institut de Math\'ematiques de Bordeaux   \newline UMR 5251 du CNRS   \newline Universit\'e de Bordeaux I   \newline 351 cours de la Lib\'eration   \newline 33 405 Talence cedex    \newline France}
\email{bony@math.u-bordeaux1.fr}
\author{Dietrich H\"{a}fner}
\email{hafner@math.u-bordeaux1.fr}
\title[Decay and non-decay in the De~Sitter--Schwarzschild metric]{Decay and non-decay of the local energy for the wave equation in the De~Sitter--Schwarzschild metric}
\keywords{General relativity, De~Sitter--Schwarzschild metric, Local energy decay, Resonances}
\subjclass[2000]{35B34, 35P25, 35Q75, 83C57}
\begin{document}

\begin{abstract}
We describe an expansion of the solution of the wave equation in the De Sitter--Schwarzschild metric in terms of resonances. The main term in the expansion is due to a zero resonance. The error term decays polynomially if we permit a logarithmic derivative loss in the angular directions and exponentially if we permit an $\varepsilon$ derivative loss in the angular directions. 
\end{abstract}

\maketitle


\section{Introduction}

There has been important progress in the question of local energy
decay for the solution of the wave equation in black hole type
space-times other the last years. The best results are now known in
the Schwarzschild space-time. We refer to the papers of Blue--Soffer
\cite{BlSo06_01}, Blue--Sterbenz \cite{BlSt06_01}, and Dafermos--Rodnianski \cite{DaRo05_01} and references therein for an
overview. See also the paper of Finster--Kamran--Smoller--Yau for
the Kerr space-time \cite{FKSY06_01}. Results on the decay of local energy
are believed to be a prerequisite for a possible proof of
the global nonlinear stability of these space-times. Today global
nonlinear stability is only known for the Minkowski space-time (see
\cite{ChK93_01}).

From our point of view one of the most efficient approaches to the
question of local energy decay is the theory of resonances. Resonances
correspond to the frequencies and rates of dumping of signals emitted
by the black hole in the presence of perturbations (see
\cite[Chapter 4.35]{ch92_01}). On the one hand these resonances are today
an important hope of effectively detecting the presence of a black hole as
we are theoretically able to measure the corresponding gravitational
waves. On the other hand, the distance of the resonances to the real
axis reflects the stability of the system under the perturbation:
larger distances correspond to more stability. In particular the
knowledge of the localization of resonances permits to answer the
question if there is decay of local energy and at which rate. The aim
of the present paper is to show how this method applies to the
simplest model of a black hole: the De Sitter--Schwarzschild black
hole.

In the euclidean space, such results are already known, especially in the non trapping geometry. The first result is due to Lax and Phillips (see their book \cite[Theorem III.5.4]{LaPh89_01}). They have proved that the cut-off propagator associated to the wave equation outside an obstacle in odd dimension $\geq 3$ (more precisely the Lax--Phillips semi-group $Z (t)$) has an expansion in terms of resonances if $Z(T)$ is compact for one $T$. In particular, the local energy decays exponentially uniformly. From Melrose--Sj\"{o}strand \cite{MeSj78_01}, this assumption is true for non trapping obstacles. Va{\u\i}nberg \cite{Va89_01} has obtained such results for general, non trapping, differential operators using different techniques. In the trapping case, we know, by the work of Ralston \cite{Ra69_01}, that it is not possible to obtain uniform decay estimates without loss of derivatives. In the exterior domain of two strictly convex obstacles, the local energy decays exponentially with a loss of derivatives, by the work of Ikawa \cite{Ik82_01}. This situation is close to the one treated in this paper. We also mention the works Tang--Zworski \cite{TaZw00_01} and Burq--Zworski \cite{BuZw01_01} concerning the resonances close to the real line.

Thanks to the work of S\'a Barreto and Zworski (\cite{SaZw97_01}) we
have a very good knowledge of the localization of resonances for the
wave equation in the De Sitter--Schwarzschild metric. Using their
results we can describe an expansion of the solution of the wave
equation in the De Sitter--Schwarzschild metric in terms of
resonances. The main term in the expansion is due to a zero
resonance. The error term decays polynomially if we permit a
logarithmic derivative loss in the angular directions and
exponentially if we permit an $\varepsilon$ derivative loss in the
angular directions. For initial data in the complement of a
one-dimensional space the local energy is integrable if we permit a
$(\ln \langle -\Delta_{\omega} \rangle)^{\alpha}$ derivative loss with
$\alpha>1$. This estimate is almost optimal in the sense that it
becomes false for $\alpha<\frac{1}{2}$. 

The method presented in this paper does not directly apply to the
Schwarzschild case. This is not linked to the difficulty of the photon
sphere which we treat in this paper, but to the possible
accumulation of resonances at the origin in the Schwarzschild case.

The exterior of the De Sitter--Schwarzschild black hole is given by
\begin{gather}
( \CM , g),  \quad \CM = \R_{t} \times X \text{ with } X = ] r_{-} , r_{+}  [_{r} \times \S^{2}_{\omega} \label{a1}  \\
g = \alpha^{2} d t^{2} - \alpha^{-2} d r^{2} - r^{2} \vert d \omega \vert^{2}, \quad \alpha = \Big( 1 - \frac{2 M}{r} - \frac{1}{3} \Lambda r^{2} \Big)^{1/2},  \label{a2}
\end{gather}
where $M>0$ is the mass of the black holes and $0 < 9 M^{2} \Lambda < 1$ is the cosmological constant. $r_{-}$ and $r_{+}$ are the two positive roots of $\alpha =0$. We also denoted by $\vert d \omega \vert^{2}$ the standard metric on $\S^{2}$.

The corresponding d'Alembertien is
\begin{equation} \label{a3}
\Box_{g} = \alpha^{-2} \big( D_{t}^{2} - \alpha^{2}r^{-2} D_{r} (r^{2} \alpha^{2}) D_{r} + \alpha^{2} r^{-2} \Delta_{\omega} \big),
\end{equation}
where $D_{\bullet} = \frac{1}{i} \partial_{\bullet}$ and $- \Delta_{\omega}$ is the positive Laplacian on $\S^{2}$.
We also denote
\begin{equation*}
\widehat{P} = \alpha^{2} r^{-2} D_{r} (r^{2} \alpha^{2} ) D_{r} - \alpha^{2} r^{-2} \Delta_{\omega} ,
\end{equation*}
the operator on $X$ which governs the situation on $L^{2} ( X, r^{2} \alpha^{-2} d r \, d \omega )$. We define
\begin{equation*}
P = r \widehat{P} r^{-1},
\end{equation*}
on $L^{2} ( X, \alpha^{-2} d r \, d \omega )$, and, in the coordinates $(r, \omega )$, we have
\begin{equation*}
P = \alpha^{2} D_{r} ( \alpha^{2} D_{r}) - \alpha^{2}r^{-2} \Delta_{\omega} + r^{-1} \alpha^{2} (\partial_{r} \alpha^{2}) .
\end{equation*}

We introduce the Regge--Wheeler coordinate given by
\begin{equation}  \label{a7}
x' (r) = \alpha^{-2}
\end{equation}
In the coordinates $(x , \omega)$, the operator $P$ is given by
\begin{equation}  \label{a8}
P = D_{x}^{2} - \alpha^{2} r^{-2} \Delta_{\omega} + \alpha^{2} r^{-1} ( \partial_{r} \alpha^{2} )
\end{equation}
on $L^{2} ( X, d x \, d \omega )$. Let $V = \alpha^{2} r^{-2}$ and $W = \alpha^{2} r^{-1} ( \partial_{r} \alpha^{2} )$ be the potentials appearing in the previous operator. As stated in Proposition 2.1 of \cite{SaZw97_01}, the work of Mazzeo--Melrose \cite{MaMe87_01} implies that for $\chi\in C_0^{\infty}(\R)$  
\begin{equation*}
R_{\chi} ( \lambda ) = \chi(P-\lambda^{2} )^{-1}\chi ,
\end{equation*}
has a meromorphic extension from the upper half plane to $\C$. The poles $\lambda$ of this meromorphic extension are called resonances. We recall the principal result of \cite{SaZw97_01}:

\begin{theorem}[{\bf S\'a Barreto--Zworski}]\sl \label{a52}
There exists $K>0$ and $\theta>0$ such that for any $C>0$ there exists an injective map, $\tilde{b}$, from the set of pseudo-poles
\begin{eqnarray*}
\frac{(1-9\Lambda M^2)^{\frac{1}{2}}}{3^{\frac{3}{2}}M}\left(\pm \N\pm \frac{1}{2}-i\frac{1}{2}\left(\N_0+\frac{1}{2}\right)\right) ,
\end{eqnarray*}
into the set of poles of the meromorphic continuation of $(P-\lambda^2)^{-1} : L^{2}_{\rm comp} \to L^{2}_{\rm loc}$ such that all the poles in 
\begin{eqnarray*}
\Omega_C=\{\lambda : \, \im \lambda>-C,\, |\lambda|>K,\, \im \lambda>-\theta |\re\lambda| \} ,
\end{eqnarray*}
are in the image of $\tilde{b}$ and for $\tilde{b}(\mu)\in \Omega_C$,
\begin{eqnarray*}
\tilde{b}(\mu)-\mu\rightarrow 0\quad\mbox{as}\quad |\mu|\rightarrow \infty.
\end{eqnarray*}
If $\mu= \mu_{\ell , j}^{\pm} = 3^{-\frac{3}{2}}M^{-1}(1-9\Lambda M^2)^{\frac{1}{2}} \big( (\pm \ell \pm \frac{1}{2}) - i \frac{1}{2} (j + \frac{1}{2})\big)$, $\ell \in \N$, $j \in \N_{0}$, then the corresponding pole, $\tilde{b}(\mu)$, has multiplicity $2\ell +1$.
\end{theorem}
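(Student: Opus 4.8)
The plan is to follow S\'a~Barreto and Zworski \cite{SaZw97_01}: use the spherical symmetry to reduce to a one-dimensional semiclassical problem at the photon sphere, where the operative phenomenon is the quantization of resonances generated by a non-degenerate maximum of the potential. First I would decompose $L^{2}(X,dx\,d\omega)=\bigoplus_{\ell\ge0}L^{2}(\R_{x})\otimes E_{\ell}$ with $E_{\ell}=\ker(-\Delta_{\omega}-\ell(\ell+1))$, $\dim E_{\ell}=2\ell+1$; then $P$ acts on the $\ell$-block as $P_{\ell}=D_{x}^{2}+\ell(\ell+1)V+W$, with $V=\alpha^{2}r^{-2}$ and $W=\alpha^{2}r^{-1}(\partial_{r}\alpha^{2})$ real-analytic in $x$ and exponentially decaying as $x\to\pm\infty$ (where $r\to r_{\pm}$, near which $\alpha^{2}$ vanishes simply). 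An analytic distortion $x\mapsto x+ig(x)$ outside a large interval makes $\{P_{\ell}-\lambda^{2}\}$ a Fredholm analytic family whose discrete eigenvalues below the distorted essential spectrum are exactly the resonances of $P_{\ell}$ (legitimate by the Mazzeo--Melrose \cite{MaMe87_01} continuation already quoted for $P$), so the poles of $(P-\lambda^{2})^{-1}$ are the union over $\ell$ of those of $P_{\ell}$, and $\tilde b(\mu_{\ell,j}^{\pm})$ will inherit multiplicity $(2\ell+1)$ times its multiplicity for $P_{\ell}$. An elementary study of $V=r^{-2}-2Mr^{-3}-\Lambda/3$ gives a unique critical point $r=3M$ (the photon sphere), a non-degenerate maximum with $V(3M)=V_{0}=(1-9\Lambda M^{2})/(27M^{2})$; I would compute $\partial_{x}^{2}V$ there, which fixes the expansion rate $\gamma_{0}>0$ of the hyperbolic fixed point $(x_{0},0)$ of the Hamilton flow of $\xi^{2}+V$.

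For the semiclassical step put $h=(\ell(\ell+1))^{-1/2}$ (for $\ell\ge1$; the finitely many smaller $\ell$ are handled below) and $z=h^{2}\lambda^{2}$, so $h^{2}(P_{\ell}-\lambda^{2})=(hD_{x})^{2}+V+h^{2}W-z$ is a semiclassical Schr\"odinger operator whose outgoing null solutions sit at the rescaled resonances. For $z$ in a fixed compact set with $\re z\in(0,\infty)\setminus\{V_{0}\}$ the classical flow is non-trapping, so an escape-function/propagation estimate bounds the resolvent and yields a resonance-free strip $\im z\ge-C_{0}h\log(1/h)$; and for $|\lambda|$ large relative to $\ell$ a coarser high-energy non-trapping argument (rescale $x\mapsto|\lambda|x$, so the potential becomes small) gives a resonance-free sector. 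The only trapping is the hyperbolic orbit at energy $V_{0}$, and near $z=V_{0}$ I would invoke the quantization of barrier-top resonances: via a Birkhoff normal form at $(x_{0},0)$ and complex scaling (as in Sj\"ostrand's and G\'erard--Sj\"ostrand's treatment of a non-degenerate critical point, or via a microlocal model of the inverted oscillator), the resonances in a fixed neighbourhood of $V_{0}$ are simple and equal to $z_{j}(h)=V_{0}-ih\gamma_{0}(j+\tfrac12)+O(h^{2})$, $j\in\N_{0}$. Unwinding, $\lambda^{2}=V_{0}h^{-2}-i\gamma_{0}h^{-1}(j+\tfrac12)+O(1)$, and the square root with positive real part gives $\lambda=\pm\bigl(\sqrt{V_{0}}\,(\ell+\tfrac12)-\tfrac{i\gamma_{0}}{2\sqrt{V_{0}}}(j+\tfrac12)\bigr)+O(1/\ell)$; computing $\gamma_{0}$ from $\partial_{x}^{2}V(x_{0})$ is precisely what matches this with $\mu_{\ell,j}^{\pm}+o(1)$ and fixes the numerical constants, while $\theta$ is the opening of the sector in which the distortion and normal-form constructions stay valid.

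To assemble the statement, let $\tilde b(\mu_{\ell,j}^{\pm})$ be the resonance of $P_{\ell}$ nearest $\mu_{\ell,j}^{\pm}$; where the analysis above applies this is a bijection onto the resonances there. Injectivity on the whole lattice follows since the pseudo-poles are uniformly separated while the errors are $o(1)$, so for $K$ large distinct pseudo-poles in $\Omega_{C}$, and resonances from distinct $\ell$, cannot be confused. Surjectivity onto $\Omega_{C}$: a pole with $|\lambda|>K$ and $\im\lambda>-\theta|\re\lambda|$ belongs to some $P_{\ell}$ with $\ell\gtrsim K$ (the finitely many small-$\ell$ operators have no poles in such a sector once $K$ is large, by the high-energy argument), and the constraints put its rescaled energy in the region treated above, where the only poles are the $z_{j}(h)$. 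The asymptotic $\tilde b(\mu)-\mu\to0$ for $\tilde b(\mu)\in\Omega_{C}$ is the $O(1/\ell)$ from the square root and from $\sqrt{\ell(\ell+1)}-(\ell+\tfrac12)$ (the constraint $\im\lambda>-C$ forces $j$ bounded, so $|\mu|\to\infty$ means $\ell\to\infty$), and the multiplicity is $2\ell+1$ because the $z_{j}(h)$ are simple resonances of $P_{\ell}$ for small $h$.

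The main obstacle is the barrier-top quantization together with the \emph{completeness} that accompanies it: one needs both the asymptotics $z_{j}(h)=V_{0}-ih\gamma_{0}(j+\tfrac12)+O(h^{2})$ and that nothing else lives in the relevant region, and both uniformly as $h=(\ell(\ell+1))^{-1/2}\to0$. This is in essence a semiclassical resolvent estimate near a hyperbolic trapped set, delicate and to be carried out with care about the genuinely lower-order perturbation $h^{2}W$ and about the coefficients being analytic only away from the horizons — enough, because the $O(h)$-resonances are microlocalized at the photon sphere, but that localization itself has to be established. Everything else — the separation of variables, the geometry of $V$, the constant bookkeeping, injectivity, and the multiplicity count — is comparatively routine.
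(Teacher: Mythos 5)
This theorem is not proved in the paper under review: it is explicitly recalled as ``the principal result of \cite{SaZw97_01}'' (S\'a~Barreto--Zworski), and the authors simply cite it and then build on it in Section~2. There is therefore no ``paper's own proof'' to compare against line by line. That said, the ingredients the present paper re-imports from \cite{SaZw97_01} in its resolvent-estimate section --- the spherical-harmonic reduction to the one-dimensional operators $P_{\ell}=D_{x}^{2}+\ell(\ell+1)V+W$, the semiclassical rescaling $h=(\ell(\ell+1))^{-1/2}$, $z=h^{2}\lambda^{2}$, Mazzeo--Melrose for the meromorphic continuation, the non-degenerate maximum of $V$ at the photon sphere $r=3M$, and the invocation of Sj\"ostrand's theory of barrier-top resonances giving the lattice $z_{j}(h)=z_{0}-ih\sqrt{|V''(x_{0})|/2}\,(j+\tfrac12)+o(h)$ (cf.\ the map $b(h)$ and $\Gamma_{0}(h)$ in \eqref{a53}--\eqref{a54}, drawn from Proposition~4.3 of \cite{SaZw97_01}) --- are exactly the ingredients you list. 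Your sketch is a faithful and essentially complete outline of the S\'a~Barreto--Zworski argument, and your identification of the genuinely hard point (uniform barrier-top quantization with completeness, handling the subprincipal term $h^{2}W$ and the merely sectorial analyticity near the horizons) is correct. The one thing worth flagging is that you should not attribute this proof to the present paper; if this theorem appears here, the appropriate ``proof'' is a citation, not the argument you sketched, which belongs to \cite{SaZw97_01}.
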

\begin{figure}[!h]
\begin{picture}(0,0)%
\includegraphics{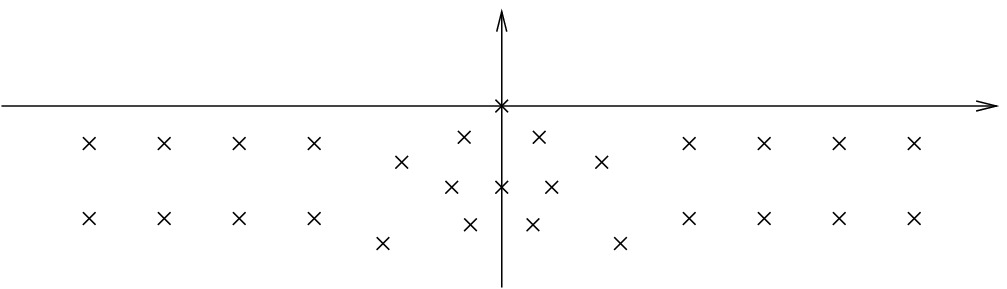}%
\end{picture}%
\setlength{\unitlength}{1579sp}%
\begingroup\makeatletter\ifx\SetFigFont\undefined%
\gdef\SetFigFont#1#2#3#4#5{%
  \reset@font\fontsize{#1}{#2pt}%
  \fontfamily{#3}\fontseries{#4}\fontshape{#5}%
  \selectfont}%
\fi\endgroup%
\begin{picture}(12044,3419)(-21,-3158)
\put(6976,-361){\makebox(0,0)[lb]{\smash{{\SetFigFont{10}{12.0}{\rmdefault}{\mddefault}{\updefault}$\lambda \in \C$}}}}
\end{picture}%
\caption{The resonances of $P$ near the real axis.}
\label{reso}
\end{figure}

The natural energy space ${\mathcal E}$ for the wave equation is given by the completion 
of $C_0^{\infty}(\R\times \S^2)\times C_0^{\infty}(\R\times \S^2)$ in the norm 
\begin{equation}
\label{ES}
\Vert(u_0,u_1)\Vert_{\mathcal E}^2=\Vert u_1\Vert^2+\langle P u_0,u_0 \rangle.
\end{equation}
It turns out that this is not a space of distributions. The problem is very similar to the problem for the wave equation in dimension $1$. We therefore introduce another energy space ${\mathcal E}_{a,b}^{\rm mod} \, (-\infty<a<b<\infty)$ defined as the completion of $C_0^{\infty}(\R\times \S^2)\times C_0^{\infty}(\R\times \S^2)$ in the norm
\begin{eqnarray*}
\Vert(u_0,u_1)\Vert^2_{{\mathcal E}^{\rm mod} }=\Vert u_1\Vert^2+\langle Pu_0,u_0\rangle +\int_a^b\int_{\S^2} \vert u_{0} (s,\omega) \vert^2 d s \, d\omega.
\end{eqnarray*}
Note that for any $-\infty<a<b<\infty$ the norms ${\mathcal E}^{\rm mod}_{a,b}$ and ${\mathcal E}^{\rm mod}_{0,1}$ are equivalent. We will therefore only work with the space ${\mathcal E}^{\rm mod}_{0,1}$ in the future and note it from now on ${\mathcal E}^{\rm mod}$.
Let us write the wave equation as a first order equation in the following way:
\begin{eqnarray*}
\left\{ \begin{aligned} &i\partial_t v = Lv  \\ 
&v(0) = v_0 \end{aligned} \right. \quad \text{with} \quad
L=\left(\begin{array}{cc} 0 & i\\ -iP & 0 \end{array} \right).
\end{eqnarray*}
Let ${\mathcal H}^k$ be the scale of Sobolev spaces associated to $P$. We note ${\mathcal H}^2_c$ the completion of ${\mathcal H}^2$ in the norm $\Vert u\Vert_2^2:=\langle Pu,u\rangle+\Vert Pu\Vert^2.$ Then $(L,D(L)={\mathcal H}^2_c\oplus{\mathcal H}^1)$ is selfadjoint on ${\mathcal E}$. We note ${\mathcal E}^k$ the scale of Sobolev spaces associated to $L$. Note that because of 
\begin{eqnarray}
\label{RL}
(L-\lambda)^{-1}=(P-\lambda^2)^{-1}\left(\begin{array}{cc} \lambda & i \\ -iP & \lambda \end{array} \right)
\end{eqnarray}
we can define a meromorphic extension of the cut-off resolvent of $L$ by using the meromorphic extension of the cut-off resolvent of $P$ and the resonances of $L$ coincide with the resonances of $P$.

Recall that $(-\Delta_{\omega}, H^2(\S^2))$ is a selfadjoint operator with compact resolvent. Its eigenvalues are the $\ell(\ell +1),\, \ell \ge 0$ with multiplicity $2 \ell +1$. We denote
\begin{equation} \label{a70}
P_\ell =r^{-1}D_x r^2 D_x r^{-1}+\alpha^2r^{-2}\ell(\ell+1) 
\end{equation}
the operator restricted to ${\mathcal H}_\ell =L^2(\R)\times Y_\ell$ where $Y_\ell$ is the eigenspace to the eigenvalue $\ell(\ell +1)$. In the following, $P_\ell$ will be identify with the operator on $L^{2} (\R)$ given by \eqref{a70}. The spaces ${\mathcal E}_{\ell},\, {\mathcal E}^{\rm mod}_{\ell},\, {\mathcal E}^k_{\ell}$ are defined in an analogous way to the spaces ${\mathcal E},\, {\mathcal E}^{\rm mod},\, {\mathcal E}^k$. Let $\Pi_{\ell}$ be the orthogonal projector on~${\mathcal E}_{\ell}^{\rm mod}$. For $\ell \geq 1$, the space ${\mathcal E}^{\rm mod}_{\ell}$ and ${\mathcal E}_{\ell}$ are the same and the norms are uniformly equivalent with respect to $\ell$.

Using Proposition II.2 of Bachelot and Motet-Bachelot \cite{BaMo93_01}, the group $e^{-i t L}$ preserves the space ${\mathcal E}^{\rm mod}$ and there exists $C , k >0$ such that
\begin{equation*}
\Vert e^{-i t L} u \Vert_{{\mathcal E}^{\rm mod}} \leq C e^{k \vert t \vert} \Vert u \Vert_{{\mathcal E}^{\rm mod}}.
\end{equation*}
From the previous discussion, the same estimate holds with $k =0$ uniformly in $\ell \geq 1$. In particular, $(L - z)^{-1}$ is bounded on ${\mathcal E}^{\rm mod}$ for $\im z > k$, and we note ${\mathcal E}^{\rm mod , -j} = (L - z)^{j} {\mathcal E}^{\rm mod} \subset {\mathcal D}' (\R \times \S^{2})$ for $j \in \N_{0}$.

We first need a result on $P$:

\begin{proposition}\sl  \label{a50}
For $\ell \geq 1$, the operator $P_{\ell}$ has no resonance and no eigenvalue on the real axis.

For $\ell =0$, $P_{0}$ has no eigenvalue in $\R$ and no resonance in $\R \setminus \{ 0 \}$. But, $0$ is a simple resonance of $P_{0}$, and, for $z$ closed to $0$, we have
\begin{equation} \label{a64}
(P_{0} - z^{2})^{-1} = \frac{i \gamma}{z} r \< r \vert \ \cdot \ \> + H (z),
\end{equation}
where $\gamma \in ]0 , + \infty [$ and $H (z)$ is a holomorphic (bounded) operator near $0$. Equation \eqref{a64} is an equality between operators from $L^{2}_{{\rm comp}}$ to $L^{2}_{{\rm loc}}$.
\end{proposition}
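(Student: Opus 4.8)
The plan is to reduce to a one-dimensional Schr\"odinger operator and to read everything off the Jost solutions. In the Regge--Wheeler variable, \eqref{a8} identifies $P_{\ell}$ with $D_{x}^{2}+V_{\ell}$ on $L^{2}(\R_{x})$, where $V_{\ell}=\alpha^{2}r^{-2}\ell(\ell+1)+\alpha^{2}r^{-1}(\partial_{r}\alpha^{2})$; since $\alpha^{2}$ has a simple zero at each horizon, \eqref{a7} makes $r_{\pm}-r$ decay like $e^{\mp 2\kappa_{\pm}x}$ as $x\to\pm\infty$ (with $\kappa_{\pm}=\tfrac{1}{2}|\partial_{r}\alpha^{2}(r_{\pm})|$), so $V_{\ell}$ is smooth, real, and exponentially decaying. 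Hence I can use the Jost solutions $f_{\pm}(x,\lambda)\sim e^{\pm i\lambda x}$ at $\pm\infty$, which are holomorphic in $\lambda$ on a neighbourhood of $\R$ (in particular near $0$), write the resolvent kernel as $(P_{\ell}-\lambda^{2})^{-1}(x,y)=-W_{\ell}(\lambda)^{-1}f_{-}(\min(x,y),\lambda)f_{+}(\max(x,y),\lambda)$ with $W_{\ell}(\lambda)=\CW(f_{-},f_{+})=f_{-}\partial_{x}f_{+}-f_{+}\partial_{x}f_{-}$ the ($x$-independent) Wronskian, and thereby identify the real resonances of $P_{\ell}$ with the real zeros of $W_{\ell}$, and a real eigenvalue with a real zero at which $f_{\pm}$ are proportional and in $L^{2}$.

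The first step is to establish positivity. Writing $u=rv$ and integrating by parts, $\langle P_{\ell}u,u\rangle=\int_{\R}r^{2}|\partial_{x}v|^{2}\,dx+\ell(\ell+1)\int_{\R}\alpha^{2}r^{-2}|u|^{2}\,dx\ge 0$. This rules out eigenvalues in $]-\infty,0[$; it rules out $0$ as an eigenvalue, since vanishing of the form forces $v$ constant, i.e. $u\propto r\notin L^{2}(\R_{x})$ as $r_{-}<r<r_{+}$; and it rules out positive eigenvalues, since near $+\infty$ every solution is an oscillatory combination of $f_{+}(\cdot,\pm\lambda_{0})$, not $L^{2}$. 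This already gives the ``no eigenvalue on $\R$'' assertion for all $\ell\ge 0$. Running the same integration by parts on $[-R,R]$ for a merely \emph{bounded} solution of $P_{\ell}u=0$, the boundary terms tend to $0$ as $R\to\infty$ (because $u$ stays bounded while $\partial_{x}(r^{-1}u)\to 0$ exponentially), so the identity persists: for $\ell\ge 1$ it forces $u\equiv 0$, hence $f_{\pm}(\cdot,0)$ are unbounded, $W_{\ell}(0)\ne 0$, and $0$ is not a resonance of $P_{\ell}$. For $\ell=0$ it only gives $u\propto r$, and in fact $P_{0}r=0$ (because $P_{0}=r^{-1}D_{x}r^{2}D_{x}r^{-1}$), so $f_{\pm}(\cdot,0)=r/r_{\pm}$ and $W_{0}(0)=\CW(r/r_{-},r/r_{+})=0$: $0$ is a resonance of $P_{0}$.

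To rule out real resonances away from $0$ (needed for both parts), I would fix a real $\lambda_{0}\ne 0$ and write, near $+\infty$, $f_{-}(\cdot,\lambda_{0})=A\,f_{+}(\cdot,\lambda_{0})+B\,f_{+}(\cdot,-\lambda_{0})$, using $\overline{f_{+}(\cdot,\lambda_{0})}=f_{+}(\cdot,-\lambda_{0})$ by reality of $V_{\ell}$. Evaluating the constant $\CW(f_{-},\overline{f_{-}})$ at $-\infty$ and at $+\infty$ gives $|B|^{2}=|A|^{2}+1\ge 1$, so $f_{-}(\cdot,\lambda_{0})$ and $f_{+}(\cdot,\lambda_{0})$ are never proportional and $W_{\ell}(\lambda_{0})\ne 0$. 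Together with the previous step, this gives the whole proposition for $\ell\ge 1$, and for $P_{0}$ the absence of real eigenvalues and of real poles other than $0$.

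What remains --- and what I expect to be the main obstacle --- is the fine structure of the resonance of $P_{0}$ at $0$. Differentiating $(-\partial_{x}^{2}+V_{0}-\lambda^{2})f_{\pm}=0$ in $\lambda$ at $\lambda=0$ shows that $\partial_{\lambda}f_{\pm}(\cdot,0)$ solve $P_{0}u=0$ with $\partial_{\lambda}f_{\pm}(x,0)=\pm ix+O(1)$ and $\partial_{x}\partial_{\lambda}f_{\pm}(x,0)\to\pm i$ as $x\to\pm\infty$; expanding these in the basis $\{\,r,\ r\int^{x}r^{-2}\,\}$ of solutions and using $\CW(r,\,r\int^{x}r^{-2})=1$ one finds $W_{0}'(0)=\CW(\partial_{\lambda}f_{-}(\cdot,0),f_{+}(\cdot,0))+\CW(f_{-}(\cdot,0),\partial_{\lambda}f_{+}(\cdot,0))=i(r_{-}^{2}+r_{+}^{2})/(r_{-}r_{+})\ne 0$, so the zero of $W_{0}$ at $0$ is simple. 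Then near $0$ one has $W_{0}(z)=W_{0}'(0)z(1+O(z))$ while $f_{-}(x,z)f_{+}(y,z)=r(x)r(y)/(r_{-}r_{+})+O(z)$, and dividing yields \eqref{a64} with $\gamma=(r_{-}^{2}+r_{+}^{2})^{-1}>0$ and $H(z)$ holomorphic near $0$ as an operator $L^{2}_{\rm comp}\to L^{2}_{\rm loc}$. The delicate points are tracking the linear-in-$x$ growth of $\partial_{\lambda}f_{\pm}(\cdot,0)$ and the normalisation $\CW(r,\,r\int^{x}r^{-2})=1$; the sign of $\gamma$ is easiest to slip on, and is most safely fixed by noting that $(P_{0}+\varepsilon^{2})^{-1}=(P_{0}-(i\varepsilon)^{2})^{-1}$ is positive, with $\sim\frac{\gamma}{\varepsilon}\,r\<r,\,\cdot\,\>$ as $\varepsilon\downarrow 0$.
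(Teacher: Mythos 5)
Your proof is correct and, like the paper, is built around Jost solutions and the Wronskian, including the same integration-by-parts argument at $\lambda=0$ for $\ell\geq 1$ and the identification of $e_{\pm}(\cdot,0)$ with multiples of $r$ for $\ell=0$. Where you diverge is in the subsidiary steps. To exclude eigenvalues the paper simply cites Reed--Simon (Theorem XIII.58) for $\R\setminus\{0\}$ and Bachelot--Motet-Bachelot for $0$, whereas you derive them from the positivity of the quadratic form after the substitution $u=rv$; to exclude real resonances away from $0$ the paper invokes the limiting absorption principle of Mourre, whereas you use the scattering identity $|B|^{2}=|A|^{2}+1$ for the Wronskian $\CW(f_{-},\overline{f_{-}})$; and, most significantly, to establish that the pole at $0$ is simple and to compute the residue, the paper uses the functional-calculus bound $\Vert\varepsilon^{2}(P_{0}+\varepsilon^{2})^{-1}\Vert\le 1$ to cap the pole order at $2$ and then excludes order $2$ by the absence of eigenvalues, whereas you differentiate the Jost solutions in $\lambda$ and compute $W_{0}'(0)=i(r_{-}^{2}+r_{+}^{2})/(r_{-}r_{+})\neq 0$ directly. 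The paper's route is softer and less computational, relying on abstract positivity and avoiding any analysis of $\partial_{\lambda}f_{\pm}$; your route is more explicit and self-contained and yields, as a bonus, the closed formula $\gamma=(r_{-}^{2}+r_{+}^{2})^{-1}$, which the paper only establishes to be a positive number. Both ultimately use the positivity of $(P_{0}+\varepsilon^{2})^{-1}$ as the clean way to pin down the sign of $\gamma$, which, as you note, is the safest way to avoid a sign slip in the Wronskian conventions.
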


The proof of Proposition \ref{a50} is given in Section \ref{a51}. For $\chi\in C_0^{\infty}(\R)$ we put in the following:
\begin{equation*}
\widehat{R}_{\chi}(\lambda)=\chi(L-\lambda)^{-1}\chi.
\end{equation*}
For a resonance $\lambda_j$ we define $m(\lambda_j)$ by the Laurent
expansion of the cut-off resolvent near $\lambda_j$:
\begin{eqnarray*}
\widehat{R}_{\chi}(\lambda)=\sum_{k=-(m(\lambda_j)+1)}^{\infty}A_k(\lambda-\lambda_j)^k.
\end{eqnarray*}
We also define $\pi^{\chi}_{j,k}$ by 
\begin{eqnarray}
\label{pr}
\pi^{\chi}_{j,k}=\frac{-1}{2 \pi i}\oint\frac{(-i)^k}{k!}\widehat{R}_{\chi}(\lambda)(\lambda-\lambda_j)^k d\lambda.
\end{eqnarray}
The main result of this paper is the following:

\begin{theorem}\sl  \label{mainth}
Let $\chi\in C_0^{\infty}(\R)$. 

$(i)$ Let
$0<\mu\notin
\frac{(1-9\Lambda M^2)^{1/2}}{3^{1/2} M}\frac{1}{2}\left(\N_0+\frac{1}{2}\right)$
such that there is no resonance with $\im z=-\mu$. Then there exists
$M>0$ with the following property. Let $u\in
{\mathcal E}^{\rm mod}$ such that $\langle -\Delta_{\omega}\rangle^M u\in
{\mathcal E}^{\rm mod}$. Then we have:
\begin{equation}
\label{RE1}
\chi e^{-itL}\chi u = \sum_{\fract{\lambda_j\in {\rm Res} \, P}{\im
    \lambda_j>-\mu}}\sum_{k=0}^{m(\lambda_j)}e^{-i\lambda_j
  t}t^k\pi_{j,k}^{\chi}u+E_1(t) ,
\end{equation}
with
\begin{equation} \label{a68}
\Vert E_1(t)\Vert_{{\mathcal E}^{\rm mod}} \lesssim e^{-\mu t}\Vert\langle
-\Delta_{\omega}\rangle^Mu\Vert_{{\mathcal E}^{\rm mod}} ,
\end{equation}
and the sum is absolutely convergent in the sense that 
\begin{eqnarray}
\label{sp}
\sum_{\fract{\lambda_j\in {\rm Res} \, P}{\im
    \lambda_j>-\mu}}\sum_{k=1}^{m(\lambda_j)}\Vert\pi_{j,k}^{\chi}\langle -\Delta_{\omega} \rangle^{-M}\Vert_{{\mathcal L}({\mathcal E}^{\rm mod} )}\lesssim 1.
\end{eqnarray}

$(ii)$ There exists $\varepsilon>0$ with the following property. Let $g\in C([0, + \infty[),\, 
\lim_{|x|\rightarrow \infty}g(x)=0$, positive, strictly decreasing with $x^{-1}\le g(x)$ for $x$ large. Let $u=(u_1,u_2)\in {\mathcal E}^{\rm mod}$ be such that $\big( g( -\Delta_{\omega} ) \big)^{-1} u \in {\mathcal E}^{\rm mod}$. Then we have      
\begin{equation}  \label{meq1}
\chi e^{-itL}\chi u = \gamma \left(\begin{array}{c} r\chi\langle r, \chi u_2 \rangle \\ 0 \end{array} \right)+E_2(t)u ,
\end{equation}
with
\begin{equation}  \label{miq1}
\Vert E_2(t)u\Vert_{{\mathcal E}^{\rm mod}} \lesssim g(e^{\varepsilon  t}) \big\Vert \big( g( -\Delta_{\omega} ) \big)^{-1} u \big\Vert_{{\mathcal E}^{\rm mod}} .
\end{equation}
\end{theorem}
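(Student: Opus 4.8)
The plan is to prove both parts by a contour deformation argument for the resolvent of $L$, using the resonance-free regions furnished by Theorem \ref{a52} and Proposition \ref{a50}, combined with a decomposition into angular modes $\ell$. First I would write, for $u\in\CE^{\rm mod}$ with sufficient angular regularity and $\im z$ large,
\begin{equation*}
\chi e^{-itL}\chi u=\frac{1}{2\pi i}\int_{\im z = N} e^{-izt}\,\widehat R_\chi(z)u\,dz ,
\end{equation*}
which holds after justifying convergence using the bound $\Vert e^{-itL}u\Vert\le Ce^{k|t|}\Vert u\Vert$ and \eqref{RL}. The strategy is then to push the contour down to $\im z=-\mu$ (part $(i)$) or, in a mode-dependent way, to $\im z=-\varepsilon(\ell+\tfrac12)$ (part $(ii)$), picking up the residues at the resonances $\lambda_j$, which by \eqref{pr} produce exactly the terms $e^{-i\lambda_j t}t^k\pi^\chi_{j,k}u$. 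The remainder $E_1(t)$ or $E_2(t)u$ is the integral over the shifted contour, and the exponential gain $e^{-\mu t}$ (resp.\ $g(e^{\varepsilon t})$) comes from the factor $e^{-izt}$ on that contour.

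The decomposition into angular modes is where the derivative loss enters. Writing $u=\sum_\ell \Pi_\ell u$ and $\widehat R_\chi(z)=\sum_\ell \widehat R_{\chi,\ell}(z)$, I would use the semiclassical rescaling $h=(\ell+\tfrac12)^{-1}$ built into $P_\ell$ (cf.\ \eqref{a70}) so that Theorem \ref{a52}, read at fixed $\ell$, gives a resonance-free strip of width $\sim\varepsilon h^{-1}=\varepsilon(\ell+\tfrac12)$ below the real axis near the real frequencies of size $\ell$, together with polynomial resolvent estimates there (of the form $\Vert\widehat R_{\chi,\ell}(z)\Vert\lesssim e^{C/h}$ near the trapped set, improved to polynomial bounds away from it via the results of S\'a Barreto--Zworski / Tang--Zworski-type estimates). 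Summing over $\ell$ the contributions of the contour at $\im z=-\varepsilon(\ell+\tfrac12)$ gives a bound by $\sum_\ell e^{-\varepsilon(\ell+\tfrac12)t}\Vert\Pi_\ell u\Vert$, and choosing the weight $g(-\Delta_\omega)^{-1}$ exactly compensates: $e^{-\varepsilon(\ell+\tfrac12)t}=g(e^{\varepsilon t})\cdot\frac{e^{-\varepsilon(\ell+\tfrac12)t}}{g(e^{\varepsilon t})}$, and monotonicity of $g$ together with $x^{-1}\le g(x)$ controls the ratio uniformly in $\ell$ and $t$. For part $(i)$ one fixes the horizontal line $\im z=-\mu$; only finitely many $\ell$ contribute resonances above it (since by Theorem \ref{a52} the resonances for mode $\ell$ sit near $\pm(\ell+\tfrac12)\cdot 3^{-3/2}M^{-1}(1-9\Lambda M^2)^{1/2}-i\cdots$, so $\im\lambda_j>-\mu$ forces $j$ bounded, but $\ell$ unbounded), hence the sum \eqref{sp} — here the angular weight $\langle-\Delta_\omega\rangle^{-M}$ absorbs the polynomial-in-$\ell$ operator norms of the $\pi^\chi_{j,k}$, giving absolute convergence.

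The $\ell=0$ mode is treated separately using Proposition \ref{a50}: the contour cannot be pushed past $0$, and the residue at the simple pole $z=0$ of $(P_0-z^2)^{-1}$, transported through \eqref{RL}, produces precisely the main term $\gamma\,(r\chi\langle r,\chi u_2\rangle,\,0)^{T}$ in \eqref{meq1} (and the corresponding $k=0$, $\lambda_j=0$ term in \eqref{RE1}); the holomorphic part $H(z)$ contributes to the remainder with an exponential rate since for $\ell=0$ the geometry is non-trapping away from $0$ and the resolvent extends with good bounds into a full strip.

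I expect the main obstacle to be the uniformity in $\ell$ of the resolvent estimates on the deformed contours: Theorem \ref{a52} as stated is about the location and multiplicity of poles, so one must extract from its proof (the semiclassical analysis near the photon sphere $r=3M$, a non-degenerate hyperbolic trapped set) polynomial bounds $\Vert\chi(P_\ell-z^2)^{-1}\chi\Vert\lesssim \langle z\rangle^{C}$ uniform for $z$ on the contour and uniform in $\ell$, including the transition regime $|z|\sim\ell$ where the effective potential barrier is borderline. This is the technical heart; once it is in place, the contour shift, the residue computation via \eqref{pr}, and the summation over $\ell$ with the weight $g$ are comparatively routine, as is checking that all manipulations take place in $\CE^{\rm mod}$ rather than in the problematic energy space $\CE$, using the equivalence of $\CE_\ell^{\rm mod}$ and $\CE_\ell$ for $\ell\ge1$ and handling $\ell=0$ by hand.
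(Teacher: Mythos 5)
Your high-level picture — Fourier--Laplace representation, contour deformation picking up residues via \eqref{pr}, mode-by-mode analysis, the $\ell=0$ pole from Proposition \ref{a50} giving the leading term — is correct and matches the paper. But there are two genuine gaps, one of which breaks your part $(ii)$ entirely. First, you misread Theorem \ref{a52}: the pseudo-poles are at $\mu_{\ell,j}^{\pm}\approx \mathrm{const}\cdot\big((\pm\ell\pm\tfrac12)-\tfrac{i}{2}(j+\tfrac12)\big)$, so the resonances nearest the real axis ($j=0$) sit at a \emph{fixed, $\ell$-independent} depth $\sim\mathrm{const}/4$, not at depth $\sim\varepsilon(\ell+\tfrac12)$. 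There is no growing resonance-free strip, and the mode-dependent contour $\im z=-\varepsilon(\ell+\tfrac12)$ you propose would cross infinitely many resonances for large $\ell$. The paper's mechanism for part $(ii)$ is entirely different: one keeps the same (essentially fixed-depth) contour as in part $(i)$, where the estimate over the photon-sphere zone $|\re k|\sim\ell$ produces a factor $\ell^{C\mu}$ (estimate \eqref{a62}, via Theorem \ref{a44} iii and Proposition \ref{prop2.4}), and then makes a $t$-dependent dichotomy in $\ell$: if $e^{\varepsilon' t}>R\ell$ the factor $\ell^{C\mu}$ is absorbed into $e^{-(\mu - C\mu\varepsilon')t}$, while if $R\ell\ge e^{\varepsilon' t}$ one uses only the trivial uniform bound $\|\chi e^{-itL}\chi\|_{\mathcal{L}(\mathcal E^{\rm mod}_\ell)}\lesssim 1$ together with $1\le g(e^{2\varepsilon'' t})/g(\ell(\ell+1))$, which is exactly where the hypothesis $x^{-1}\le g(x)$ enters. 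Your weight $g$ is meant to pay for the $t$-dependent cutoff in $\ell$, not to cancel a fictitious $e^{-\varepsilon\ell t}$.

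Second, and independently, the contour integral $\int_{\im k=\nu}e^{-ikt}\widehat R_\chi^\ell(k)u\,dk$ that you start from does not converge absolutely: $\widehat R_\chi^\ell(k)$ stays $O(1)$ on horizontal lines. The paper's Lemma \ref{a61} (Va\u{\i}nberg's regularization) subtracts from $\widehat R_\chi^\ell(k)$ explicit terms $\sum_j B_j(k-i(\nu+1))^{-j-1}$ to obtain $\widetilde R_\chi^\ell(k)$ with $\|\widetilde R_\chi^\ell(k)\|\lesssim\langle k\rangle^{-2}\|\widehat R_\chi^\ell(k)\|$, at the cost of landing in the larger space $\mathcal E_\ell^{\rm mod,-2}$; since the integrals of the subtracted terms vanish on the closed contour, the residue calculus is unaffected. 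Without this step the justification of the contour shift, the vanishing of the vertical pieces as $X\to\infty$, and the return from $\mathcal E^{\rm mod,-2}$ to $\mathcal E^{\rm mod}$ in \eqref{a68} are all missing. (You should also note that the paper's contour for $|\re k|>R\ell$ is not horizontal but dips like $\im k=-\mu-\ln\langle|\re k|-R\ell\rangle$, riding the $\log$-strip of Theorem \ref{a44} iv; this is what makes the $I_2,I_4$ estimates \eqref{a63} give a clean $e^{-\mu t}$ with no $\ell$-loss outside the photon-sphere zone.) Your instinct that uniform-in-$\ell$ resolvent bounds near the photon sphere are the technical heart is right, and is exactly Theorem \ref{a44}; but the expansion machinery around it needs the regularization, and part $(ii)$ needs the dichotomy, not a deeper contour.
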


\begin{remark}\rm \label{a69}
a) By the results of S\'a Barreto and Zworski we know that there exists
$\mu>0$ such that $0$ is the only resonance in $\im z>-\mu$. Choosing
this $\mu$ in $(i)$ the sum on the right hand side contains a single element which is  
\begin{eqnarray*}
\gamma \left(\begin{array}{c} r\chi\langle r, \chi u_2 \rangle \\
0 \end{array} \right).
\end{eqnarray*}

b) Again by the paper of S\'a Barreto and Zworski we know that $\lambda_{j} = \widetilde{b} ( \mu_{\ell , \widetilde{\jmath}}^{\varepsilon} )$ for all the $\lambda_j$'s outside a compact set (see Theorem \ref{a52}). For such $\lambda_{j}$, we have $m_j(\lambda_j)=0$ and $\pi^{\chi}_{j ,k} = \Pi_{\ell} \pi^{\chi}_{j ,k} = \pi^{\chi}_{j ,k} \Pi_{\ell}$ is an operator of rank $2 \ell +1$.

c) Let ${\mathcal E}^{{\rm mod},\perp}=\{u\in{\mathcal E}^{\rm mod} ; \ \langle r,\chi u_2\rangle =0 \}$.
By part $(ii)$ of the theorem, for $u \in {\mathcal E}^{{\rm mod}, \perp}$, the local energy is integrable if $( \ln \< - \Delta_{\omega} \> )^{\alpha} u \in {\mathcal E}^{\rm mod}$, for some $\alpha >1$, and decays exponentially if $\< - \Delta_{\omega} \>^{\varepsilon} u \in {\mathcal E}^{\rm mod}$ for some $\varepsilon >0$.

d) In fact, we can replace $\< - \Delta_{\omega} \>^{M}$ by $\< P \>^{2M}$ in the first part of the theorem. And, by an interpolation argument, we can obtain the following estimate: for all $\varepsilon >0$, there exists $\delta >0$ such that
\begin{equation}  \label{a66}
\chi e^{-i t L}\chi u = \gamma \left(\begin{array}{c} r\chi\langle r, \chi u_2 \rangle \\ 0 \end{array} \right)+E_3(t)u,
\end{equation}
with
\begin{equation}
\Vert E_3 (t)u\Vert_{{\mathcal E}^{\rm mod}} \lesssim e^{-\delta  t} \big\Vert \< P \>^{\varepsilon} u \big\Vert_{{\mathcal E}^{\rm mod}}.
\end{equation}
\end{remark}

\begin{remark}\rm
In the Schwarzschild case the potential $V (x)$ is only polynomially decreasing at infinity and we
cannot apply the result of Mazzeo--Melrose. Therefore we cannot exclude 
a possible accumulation of resonances at $0$. This
difficulty has nothing to do with the presence of the photon sphere
which is treated by the method presented in this paper.
\end{remark}

\begin{remark}\rm
Let $u\in {\mathcal E}^{{\rm mod} ,\perp}$ such that $(\ln\langle
-\Delta_{\omega} \rangle)^{\alpha}u\in {\mathcal E}^{\rm mod}$ for some
$\alpha>1$. 
Then we have from part $(ii)$ of the theorem, for $\lambda \in \R$,
\begin{equation}  \label{LEE}
\Big\Vert \int_0^{\infty} \chi e^{-i t (L- \lambda )}\chi u \, d t \Big\Vert_{{\mathcal E}^{\rm mod}}\lesssim\Vert(\ln \langle -\Delta_{\omega}
\rangle)^{\alpha} u \Vert_{{\mathcal E}^{\rm mod}}.
\end{equation}
This estimate is almost optimal since it becomes false for
$\alpha<\frac{1}{2}$. Indeed we have ($\lambda \in \R$):
\begin{eqnarray*}
\widehat{R}_{\chi}( \lambda )u= i \int_0^{\infty}\chi e^{-it(L-\lambda )}\chi u \, d t.
\end{eqnarray*}
Thus from (\ref{LEE}) we obtain the resolvent estimate 
\begin{eqnarray*}
\Vert\widehat{R}_{\chi}( \lambda )(\ln\langle
-\Delta_{\omega}\rangle)^{-\alpha}\Vert_{{\mathcal L}({\mathcal
    E}^{\rm mod})}\lesssim 1.
\end{eqnarray*}
It is easy to see that this entails the resolvent estimate 
\begin{eqnarray*}
\Vert R_{\chi}(\lambda )(\ln\langle-\Delta_{\omega}\rangle)^{-\alpha}\Vert \lesssim \frac{1}{\vert \lambda \vert}.
\end{eqnarray*}
We introduce the semi-classical parameter $h^2=( \ell ( \ell +1))^{-1}$ and
$\widetilde{P}=-h^2D_x^2+V(x)+h^{2} W(x)$ as in Section \ref{a65}. Then, for $R>0$, the above estimate gives the
semi-classical estimate:
\begin{eqnarray*}
\Vert\chi( \widetilde{P} - z)^{-1}\chi\Vert\lesssim\frac{\vert \ln h \vert^{\alpha}}{h},
\end{eqnarray*}
for $1/R \leq z \leq R$ (see \eqref{a33} and \eqref{a34}). Such estimate is known to be false for $\alpha<\frac{1}{2}$ and $z = z_{0}$, the maximum value of the
potential $V (x)$ (see \cite[Proposition 2.2]{AlBoRa07_01}).
\end{remark}

The proof of the theorem is based on resolvent estimates. 
Using (\ref{RL}) we see that it is sufficient to prove resolvent
estimates for $\chi(P_{\ell} -\lambda^2)^{-1}\chi$. This is the purpose of the next section.

\textbf{Acknowledgments:} We would like to thank A. Bachelot for fruitful discussions during the preparation of this article. This work was partially supported by the ANR project JC0546063 ``Equations hyperboliques dans les espaces temps de la relativit\'e g\'en\'erale : diffusion et r\'esonances''.

\section{Estimate for the cut-off resolvent.}

In this section, we obtain estimates for the cut-off resolvent of $P_{\ell}$, the operator $P$ restricted to the spherical harmonic $\ell$. We will use the description of the resonances given in S{\'a}~Barreto--Zworski \cite{SaZw97_01}. Recall that
\begin{equation}
R_{\chi} (\lambda ) = \chi (P - \lambda^{2} )^{-1} \chi ,
\end{equation} 
has a meromorphic extension from the upper half plane to $\C$. The resonances of $P$ are defined as the poles of this extension. We treat only the case $\re \lambda > - 1$ since we can obtain the same type of estimates for $\re \lambda < 1$ using $\big( R_{\chi} (- \overline{\lambda}) \big)^{*} = R_{\chi} (\lambda )$.

\begin{theorem}\sl \label{a44}
Let $C_{0} > 0$ be fixed. The operators $\chi (P_{\ell} - \lambda^{2} )^{-1} \chi$ satisfy the following estimates uniformly in $\ell$.

i) For all $R>0$, the number of resonances of $P$ is bounded in $B (0,R)$. Moreover, there exists $C>0$ such that
\begin{equation}  \label{a39}
\Vert \chi (P_{\ell} - \lambda^{2} )^{-1} \chi \Vert \leq \Vert \chi (P - \lambda^{2} )^{-1} \chi \Vert \leq C \prod_{\fract{\lambda_{j} \in {\rm Res} \, P }{\vert \lambda_{j} \vert < 2R}} \frac{1}{\vert \lambda - \lambda_{j} \vert}
\end{equation}
for all $\lambda \in B (0,R)$.

ii) For $R$ large enough, $P_{\ell}$ has no resonance in $[R, \ell /R] + i [- C_{0} ,0]$. Moreover, there exists $C>0$ such that
\begin{equation}  \label{a40}
\Vert \chi (P_{\ell} - \lambda^{2} )^{-1} \chi \Vert \leq \frac{C}{\< \lambda \>^{2}} ,
\end{equation}
for $\lambda \in [R, \ell /R] + i [- C_{0} , C_{0} ]$.

iii) Let $R$ be fixed. For $\ell$ large enough, the resonances of $P_{\ell}$ in $[\ell /R , R \ell] + i[-C_{0} ,0]$ are the $\widetilde{b} ( \mu_{\ell ,j}^{+})$ given in Theorem \ref{a52} (in particular their number is bounded uniformly in $\ell$). Moreover, there exists $C>0$ such that
\begin{equation} \label{a41}
\Vert \chi (P_{\ell} - \lambda^{2} )^{-1} \chi \Vert \leq C \< \lambda \>^{C} \prod_{\fract{\lambda_{j} \in {\rm Res} \, P_{\ell}}{\vert \lambda - \lambda_{j} \vert < 1}} \frac{1}{\vert \lambda - \lambda_{j} \vert} ,
\end{equation}
for $\lambda \in [ \ell /R, R \ell ] + i [- C_{0} ,C_{0} ]$.

Furthermore, $P_{\ell}$ has no resonance in $[ \ell /R, R \ell ] + i [- \varepsilon ,0]$, for some $\varepsilon >0$, and we have
\begin{equation}  \label{a42}
\Vert \chi (P_{\ell} - \lambda^{2})^{-1} \chi \Vert \leq C \frac{\ln \< \lambda \>}{\< \lambda \>} e^{C \vert \im \lambda \vert \ln \< \lambda \>} ,
\end{equation}
for $\lambda \in [ \ell /R, R \ell ] + i [- \varepsilon ,0]$.

iv) Let $C_{1} >0$ be fixed. For $R$ large enough, $P_{\ell}$ has no resonance in $\{ \lambda \in \C; \ \re \lambda > R \ell , \text{ and } 0 \geq \im \lambda \geq - C_{0} - C_{1} \ln \< \lambda \> \}$. Moreover, there exists $C>0$ such that
\begin{equation}  \label{a43}
\Vert \chi (P_{\ell} - \lambda^{2})^{-1} \chi \Vert \leq \frac{C}{\< \lambda \>} e^{C \vert \im \lambda \vert} ,
\end{equation}
for $\re \lambda >R \ell$ and $C_{0} \geq \im \lambda \geq - C_{0} - C_{1} \ln \< \lambda \>$.
\end{theorem}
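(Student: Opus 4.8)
The plan is to reduce everything to a one–dimensional semiclassical Schrödinger operator and then paste together the known resolvent bounds in the various spectral régimes. Writing $h^{2}=(\ell(\ell+1))^{-1}$ and $\widetilde P = -h^{2}D_{x}^{2}+V(x)+h^{2}W(x)$, one has $P_{\ell}-\lambda^{2}= h^{-2}(\widetilde P - z)$ with $z=h^{2}\lambda^{2}$ (up to the harmless conjugation by powers of $r$ which does not affect cut-off resolvents), so each estimate on $\chi(P_{\ell}-\lambda^{2})^{-1}\chi$ becomes a statement about $\chi(\widetilde P - z)^{-1}\chi$ at a corresponding energy $z$ and scale $h$. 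The potential $V=\alpha^{2}r^{-2}$ has a single nondegenerate maximum (the photon sphere) at some $z_{0}>0$, is a decaying barrier, and the classical flow is nontrapping away from the energy $z_{0}$; this is exactly the setup of S\'a Barreto--Zworski and of the one-dimensional barrier-top literature. The four items then correspond to: (i) $\lambda$ in a fixed ball, i.e. $z$ small — no semiclassical content, just the Mazzeo--Melrose meromorphy plus a maximum-principle/Cartan-type bound on the resolvent in terms of its poles; (ii) $z$ in a compact energy window below $z_{0}$ (the régime $R\le\re\lambda\le\ell/R$ forces $z\ll z_{0}$), where the operator is nontrapping and one gets the optimal $O(h)=O(\<\lambda\>^{-1})$... wait, the $\<\lambda\>^{-2}$ in \eqref{a40} is because $(P_{\ell}-\lambda^{2})^{-1}$ carries an extra $h^{2}=\<\lambda\>^{-2}$; (iii) $z$ in a fixed neighbourhood of $z_{0}$, the delicate barrier-top régime; (iv) $z$ large, where $V$ and $W$ are negligible perturbations of $-h^{2}D_{x}^{2}$ and everything is nontrapping again.

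Concretely I would proceed as follows. First, record the exact unitary equivalence $P_{\ell}\cong h^{-2}\widetilde P$ and fix the dictionary $z=h^{2}\lambda^{2}$, $\im z \sim 2h^{2}\re\lambda\,\im\lambda$; this immediately converts the regions in $\lambda$ into regions in $(z,h)$ and lets one quote resonance-free strips of width $\sim h$ (resp. $\sim h\ln(1/h)$) as strips of width $\sim\varepsilon$ (resp. $\sim\ln\<\lambda\>$) in $\lambda$. Second, for (ii) and (iv), invoke the standard nontrapping estimate $\|\chi(\widetilde P - z)^{-1}\chi\|=O(h^{-1})$ for $z$ in a compact set at positive distance from $z_{0}$ and away from $0$ and $\infty$ (Robert--Tamura, or the Mourre/commutator method), together with the complex-scaling analyticity giving a resonance-free strip of size $Ch$ below such energies and the polynomial-in-$\<\lambda\>$ growth of the resolvent there; for (iv) one must also check uniformity as $z\to\infty$, which follows since $V,h^{2}W\to0$ and $-h^{2}D_{x}^{2}$ alone has no resonances, giving the resonance-free logarithmic neighbourhood of the real axis and the bound $O(\<\lambda\>^{-1}e^{C|\im\lambda|})$ by rescaling to $h'=h/\sqrt{z}$. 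Third, for (iii), this is the barrier-top analysis: near $z_{0}$ the resonances of $\widetilde P$ are, modulo $o(h)$, the points $z_{0}-ih\sqrt{V''(x_{0})/2}\,(k+\tfrac12)$, $k\in\N_{0}$ (S\'a Barreto--Zworski, going back to the quantization condition for the inverted oscillator), which matched back through the dictionary gives exactly the $\widetilde b(\mu_{\ell,j}^{+})$; the resolvent bound \eqref{a41} then comes from a Cartan/three-lines argument applied to the holomorphic family $(z-z_{0})^{N}\cdots$, and the resonance-free sub-strip plus \eqref{a42} comes from the refined lower bound on $|z-z_{j}|$ when $\im z \ge -\varepsilon h$, noting that the product of the $O(h)$-spaced barrier-top resonances in a disc of radius $O(h)$ produces the $\ln\<\lambda\>/\<\lambda\>$ and $e^{C|\im\lambda|\ln\<\lambda\>}$ factors. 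Fourth, for (i), use that $R_{\chi}(\lambda)$ extends meromorphically with locally finitely many poles (Mazzeo--Melrose, as quoted) and apply the semiclassical maximum principle of Tang--Zworski / Burq--Zworski: since $\|R_{\chi}(\lambda)\|$ is polynomially bounded on a slightly larger circle by items (ii)--(iv), the product estimate \eqref{a39} follows, and the finiteness of the number of resonances in $B(0,R)$ is part of the meromorphy.

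The main obstacle is item (iii), the photon-sphere régime. Everything else is a matter of correctly bookkeeping the semiclassical rescaling and citing nontrapping resolvent bounds, but near the barrier top one genuinely loses: the resolvent is not $O(h^{-1})$ but only $O(h^{-N})$ for some $N$, and one must (a) pin down the resonances to within $o(h)$ so that they coincide with the S\'a Barreto--Zworski pseudo-poles $\widetilde b(\mu_{\ell,j}^{+})$ uniformly in $\ell$, (b) get the sharp resonance-free strip of size $\varepsilon h$ (in $z$) just below the real axis, which is what ultimately yields polynomial local-energy decay with only a logarithmic angular loss, and (c) extract the precise $\tfrac{\ln\<\lambda\>}{\<\lambda\>}e^{C|\im\lambda|\ln\<\lambda\>}$ shape of \eqref{a42}, which requires tracking how many barrier-top resonances sit in an $O(h)$-disc and bounding the reciprocal of the resulting finite Blaschke-type product. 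I would handle (a)--(b) by a microlocal normal-form reduction to the model operator $-h^{2}D_{y}^{2}-y^{2}$ near $(x_{0},0)$ plus the nontrapping estimate outside a neighbourhood of the top, glued by a standard partition-of-unity/parametrix argument, and (c) by the Cartan estimate on the analytically continued resolvent combined with the explicit count of model resonances; uniformity in $\ell$ is automatic because after rescaling the relevant energy window $z\in[z_{0}-\delta,z_{0}+\delta]$ and the small parameter $h\to0$ as $\ell\to\infty$ are the only data entering.
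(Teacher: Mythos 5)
Your overall plan — scale to a one–dimensional semiclassical operator $\widetilde P$, split into the four energy régimes, and quote known resolvent bounds — is the right skeleton and matches the paper for zones (i), (iii) and (iv). For (iii) you correctly identify the barrier-top analysis (S\'a~Barreto--Zworski/Sj\"ostrand quantization plus a Cartan-type estimate and the semiclassical maximum principle, cf.\ Lemma 6.5 of Bony--Michel and Burq's lemma), and for (iv) you correctly identify that $\mu V+\nu W$ is a small perturbation of the free operator and that a nontrapping bound together with a logarithmic resonance-free region (Martinez) and the semiclassical maximum principle closes the argument. One small remark on (i): you cannot get \eqref{a39} by ``polynomial bounds on a slightly larger circle by items (ii)--(iv)'' because those items only concern $|\lambda|$ comparable to $\ell$; the bound in $B(0,R)$ comes directly from the Mazzeo--Melrose meromorphy of $R_\chi$ and boundedness on $\partial B(0,2R)$ once the finitely many poles are divided out, which is the elementary argument the paper uses.

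The genuine gap is zone (ii). You describe it as a compact energy window with $z$ ``at positive distance from $z_0$ and away from $0$'' and invoke the standard nontrapping $O(h^{-1})$ bound via Mourre/Robert--Tamura. But with your scaling $h=(\ell(\ell+1))^{-1/2}$ and $\lambda\in[R,\ell/R]$, the energy $z=h^2\lambda^2$ runs over an interval of the form $[CR^2/\ell^2,\,C/R^2]$, whose left endpoint tends to $0$ as $\ell\to\infty$; equivalently, in the paper's scaling $h=N^{-1}$, $N\sim\re\lambda$, the energy $z\in[1/4,4]$ is fixed but the barrier coefficient $\beta^2=h^2\ell(\ell+1)$ is unboundedly large. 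In either picture the assumption ``$z$ at positive distance from $0$'' (or ``barrier height bounded'') is violated uniformly in $\ell$, and a Mourre-type nontrapping estimate does not give a bound that is uniform in this limit, let alone a uniform resonance-free strip of width $C_0$ below the real axis. The missing ingredient is precisely the exponential decay of $V$ at $\pm\infty$, \eqref{a36}, which makes available Zworski's estimate for convex co-compact hyperbolic surfaces (Proposition~4.1 of \cite{Zw99_01}, via the contour $\Gamma_\theta$ adapted to the exponential rate): this is what the paper uses in zone (ii), and it is the reason, explicitly stated in the paper, that the method breaks down for the Schwarzschild potential (only polynomial decay, hence possible accumulation of resonances at $0$). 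As written, your argument for (ii) would also apply to Schwarzschild, which is a sign it cannot be correct; you need to replace the ``nontrapping away from $0$'' step with the exponential-decay input.
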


The results concerning the localization of the resonances in this theorem are proved in \cite{BaMo93_01} and \cite{SaZw97_01}, the following figure summaries the different estimates of the resolvent.
\begin{figure}[!h]
\begin{picture}(0,0)%
\includegraphics{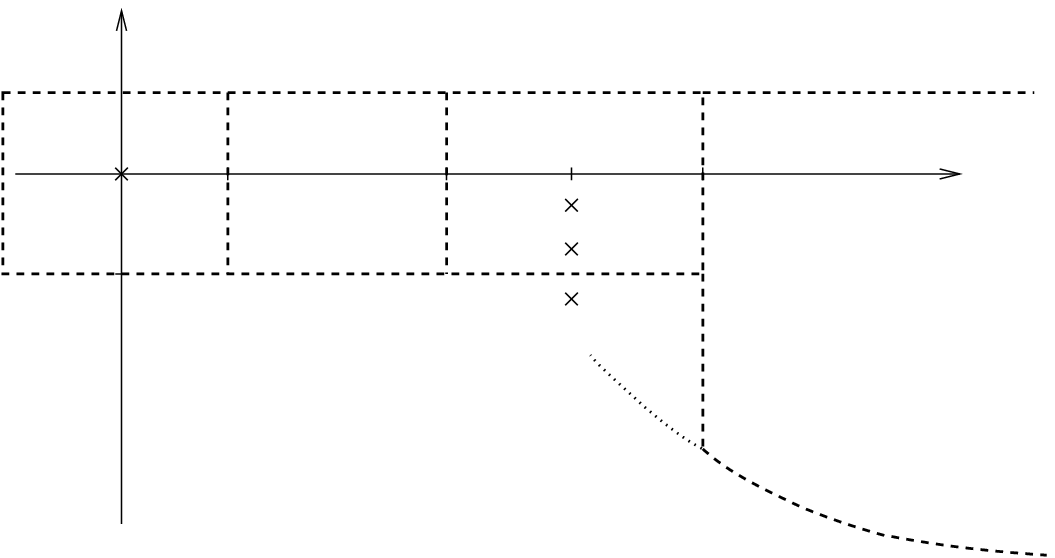}%
\end{picture}%
\setlength{\unitlength}{1579sp}%
\begingroup\makeatletter\ifx\SetFigFont\undefined%
\gdef\SetFigFont#1#2#3#4#5{%
  \reset@font\fontsize{#1}{#2pt}%
  \fontfamily{#3}\fontseries{#4}\fontshape{#5}%
  \selectfont}%
\fi\endgroup%
\begin{picture}(12591,6655)(418,-6769)
\put(9601,-5686){\makebox(0,0)[lb]{\smash{{\SetFigFont{10}{12.0}{\rmdefault}{\mddefault}{\updefault}$\im \lambda = -C_{0} - C_{1} \ln \< \lambda \>$}}}}
\put(976,-2911){\makebox(0,0)[lb]{\smash{{\SetFigFont{10}{12.0}{\rmdefault}{\mddefault}{\updefault}$I$}}}}
\put(4276,-2911){\makebox(0,0)[lb]{\smash{{\SetFigFont{10}{12.0}{\rmdefault}{\mddefault}{\updefault}$II$}}}}
\put(7876,-2911){\makebox(0,0)[lb]{\smash{{\SetFigFont{10}{12.0}{\rmdefault}{\mddefault}{\updefault}$III$}}}}
\put(10351,-2911){\makebox(0,0)[lb]{\smash{{\SetFigFont{10}{12.0}{\rmdefault}{\mddefault}{\updefault}$IV$}}}}
\put(3151,-5011){\makebox(0,0)[lb]{\smash{{\SetFigFont{10}{12.0}{\rmdefault}{\mddefault}{\updefault}$\lambda \in \C$}}}}
\put(2026,-3211){\makebox(0,0)[lb]{\smash{{\SetFigFont{10}{12.0}{\rmdefault}{\mddefault}{\updefault}$-C_{0}$}}}}
\put(3001,-1861){\makebox(0,0)[lb]{\smash{{\SetFigFont{10}{12.0}{\rmdefault}{\mddefault}{\updefault}$R$}}}}
\put(5476,-1786){\makebox(0,0)[lb]{\smash{{\SetFigFont{10}{12.0}{\rmdefault}{\mddefault}{\updefault}$\ell / R$}}}}
\put(8626,-1786){\makebox(0,0)[lb]{\smash{{\SetFigFont{10}{12.0}{\rmdefault}{\mddefault}{\updefault}$R \ell$}}}}
\put(6601,-1936){\makebox(0,0)[lb]{\smash{{\SetFigFont{10}{12.0}{\rmdefault}{\mddefault}{\updefault}$\ell \max V$}}}}
\end{picture}%
\caption{The different zones in Theorem \ref{a44}.}
\label{zones}
\end{figure}

In zone {\it I} which is compact, the result of Mazzeo--Melrose \cite{MaMe87_01} gives a bound uniform with respect to $\ell$ (outside of the possible resonances). In particular, part {\it i)} of Theorem \ref{a44} is a direct consequence of this work.

In zone {\it II}, the result of Zworski \cite{Zw99_01} gives us a good (uniform with respect to $\ell$) estimate of the resolvent. Here, we use the exponential decay of the potential at $+ \infty$ and $- \infty$. By comparison, the equivalent potential for the Schwarzschild metric does not decay exponentially, and our present work cannot be extended to this setting. Please note that this problem concerns only the zones {\it I} and {\it II}, but the zones {\it III} and {\it IV} can be treated the same way.

In zone {\it III}, we have to deal with the so called ``photon sphere''. The estimate \eqref{a41} follows from a general polynomial bound of the resolvent in dimension $1$ (see \cite{BoMi04_01}).

In zone {\it IV}, the potentials $\ell ( \ell +1) V$ and $W$ are very small in comparison to $\lambda^{2}$. So they don't play any role, and we obtain the same estimate as in the free case $-\Delta$ (or as in the non trapping geometry).

\Subsection{Estimate close to $0$}  \label{a51}

This part is devoted to the proof of Proposition \ref{a50} and of part {\it i)} of Theorem \ref{a44}. Since $\chi (P - \lambda^{2} )^{-1} \chi$ has a meromorphic extension to $\C$, the number of resonance in $B (0 , R)$ is always bounded and point {\it i)} of Theorem \ref{a44} is clear. It is a classical result (see Theorem XIII.58 in \cite{ReSi78_01}) that $P_{\ell}$ has no eigenvalue in $\R \setminus \{ 0 \}$. On the other hand, from Proposition II.1 of the work of Bachelot and Motet-Bachelot \cite{BaMo93_01}, $0$ is not an eigenvalue of the operators $P_{\ell}$. Moreover, from the limiting absorption principle \cite{Mo81_01},
\begin{equation}
\Vert \< x \>^{- \alpha} (P_{\ell} - (z + i 0))^{-1} \< x \>^{- \alpha} \Vert < \infty ,
\end{equation}
for $z \in \R \setminus \{ 0 \}$ and any $\alpha > 1$, we know that $P_{\ell}$ has no resonance in $\R \setminus \{ 0 \}$.

We now study the resonance $0$ using a technique specific to the one dimensional case. We start with recalling some facts about outgoing Jost solutions. Let
\begin{equation} \label{a17}
Q = - \Delta + \widetilde{V}(x) ,
\end{equation}
be a Schr\"{o}dinger operator with $\widetilde{V} \in C^{\infty} (\R)$ decaying exponentially at infinity. For $\im \lambda >0$, there exists a unique couple of functions $e_{\pm} (x, \lambda )$ such that
\begin{equation*}
\left\{ \begin{aligned}
&(Q - \lambda^{2} ) e_{\pm} (x, \lambda) =0  \\
&\lim_{x \to \pm \infty} \big( e_{\pm} (x, \lambda ) - e^{\pm i \lambda x} \big) = 0
\end{aligned} \right.
\end{equation*}
The function $e_{\pm}$ is called the outgoing Jost solution at $\pm \infty$. Since $\widetilde{V} \in C^{\infty}(\R)$ decays exponentially at infinity, the functions $e_{\pm}$ can be extended, as a $C^{\infty}(\R)$ function in $x$, analytically in a strip $\{ \lambda \in \C; \ \im \lambda > - \varepsilon \}$, for some $\varepsilon >0$. Moreover, in such a strip, they satisfy
\begin{align}
\vert e_{\pm} (x, \lambda) - e^{\pm i \lambda x} \vert =& \CO ( e^{- x (\im \lambda + \delta )})  \text{ for } \pm x >0   \label{a16}  \\
\vert \partial_{x} e_{\pm} (x, \lambda) \mp i \lambda e^{\pm i \lambda x} \vert =& \CO ( e^{- x ( \im \lambda + \delta)})  \text{ for } \pm x >0  , \label{a18}
\end{align}
for some $\delta >0$. All these properties can be found in Theorem XI.57 of \cite{ReSi79_01}.

Using these Jost solutions, the kernel of $(Q - \lambda^{2})^{-1}$, for $\im \lambda >0$ takes the form
\begin{equation}  \label{a20}
R (x, y ,\lambda ) = \frac{1}{w(\lambda)} \big( e_{+} (x, \lambda ) e_{-} (y, \lambda ) H (x-y) + e_{-} (x, \lambda ) e_{+} (y, \lambda ) H (y-x) \big) ,
\end{equation}
where $H (x)$ is the Heaviside function
\begin{equation*}
H (x) = \left\{
\begin{aligned}
& 1 \text{ for } x >0  \\
& 0 \text{ for } x \leq 0
\end{aligned} \right. ,
\end{equation*}
and
\begin{equation}  \label{a15}
w ( \lambda ) = ( \partial_{x} e_{-} ) e_{+} - ( \partial_{x} e_{+} ) e_{-} ,
\end{equation}
is the wronskian between $e_{-}$ and $e_{+}$ (the right hand side of \eqref{a15} does not depend on $x$). In particular, $w (\lambda)$ is an analytic function on $\C$. Since the $e_{\pm}$ are always non-zero thanks to \eqref{a16}, the resonances are the zeros of $w (\lambda )$. Such a discussion can be found in the preprint of Tang--Zworski \cite{TaZw07_01}.

Remark that $P_{\ell}$ is of the form \eqref{a17}. If $0$ is a resonance of one of the $P_{\ell}$'s with $\ell \geq 1$, the Jost solutions $e_{\pm} (x, 0)$ are collinear. In particular, from \eqref{a16} and \eqref{a18}, the $C^{\infty}$ function $e_{+} (x,0)$ converge to two non zero limits at $\pm \infty$ and $\partial_{x} e_{+} (x,0)$ goes to $0$ as $x \to \pm \infty$. Since
\begin{equation}  \label{a19}
P_{\ell} = r^{-1} D_{x} r^{2} D_{x} r^{-1}  + \alpha^{2} r^{-2} \ell (\ell +1) ,
\end{equation}
we get, by an integration by parts,
\begin{align}
0 =& \int_{-R}^{R} ( P_{\ell} e_{+}) \overline{e_{+}} \, d x  \nonumber  \\
=& \ell (\ell +1) \int_{-R}^{R} \vert \alpha r^{-1} e_{+} \vert^{2} \, d x + \int_{-R}^{R} \vert r D_{x} ( r^{-1} e_{+} ) \vert^{2} \, d x - \Big[ i r^{-1} \overline{e_{+}} D_{x} (r^{-1} e_{+}) \Big]_{-R}^{R} . \label{b1}
\end{align}
Since $\partial_{x} (r^{-1} e_{+}) = r^{-1} \partial_{x} e_{+} - r^{-2} \alpha^{2} e_{+}$, the last term in \eqref{b1} goes to $0$ as $R$ goes to $+ \infty$. Thus, if $\ell \geq 1$, \eqref{b1} gives $e_{+} =0$ and $0$ is not a resonance of $P_{\ell}$.

We study now the case $\ell =0$. If $u \in C^{2}(\R)$ satisfies $P_{0} u =0$, we get from \eqref{a19}
\begin{equation*}
r^{2} D_{x} r^{-1} u = - i \beta,
\end{equation*}
where $\beta \in \C$ is a constant. Then
\begin{equation*}
u (x) = \alpha r (x) + \beta r(x) \int_{0}^{x} \frac{1}{r^{2} (t)} d t ,
\end{equation*}
where $\alpha , \beta \in \C$ are constants. Note that
\begin{equation*}
\widetilde{r} (x) := r(x) \int_{0}^{x} \frac{1}{r^{2} (t)} d t = \frac{x}{r_{\pm}} + \CO (1) ,
\end{equation*}
as $x \to \pm \infty$. Since $e_{\pm} (x,0)$ are $C^{\infty}$ functions bounded at $\pm \infty$ from \eqref{a16} which satisfy $P_{0} u =0$, the two functions $e_{\pm} (x,0)$ are collinear to $r$ and then $w (0) =0$ which means that $0$ is a resonance of $P_{0}$. The resolvent of $P_{0}$ has thus the form
\begin{equation*}
(P_{0} - \lambda^{2})^{-1} = \frac{\Pi_{J}}{\lambda^{J}} + \cdots + \frac{\Pi_{1}}{\lambda} + H ( \lambda ) ,
\end{equation*}
where $H ( \lambda )$ is an analytic family of bounded operators near $0$ and $\Pi_{J} \neq 0$.

For all $\lambda = i \varepsilon$ with $\varepsilon >0$, we have
\begin{equation*}
\Vert \lambda^{2} (P_{0} - \lambda^{2})^{-1} \Vert_{L^{2} \to L^{2}} = \Vert \varepsilon^{2} (P_{0} + \varepsilon^{2})^{-1} \Vert_{L^{2} \to L^{2}} \leq 1,
\end{equation*}
from the functional calculus. This inequality implies that $J \leq 2$ and
\begin{equation*}
\Vert \Pi_{2} \Vert_{L^{2} \to L^{2}} \leq 1.
\end{equation*}
If $f(x) \in L^{2}_{\text{loc}}$ is in the range of $\Pi_{2}$, we have $f \in L^{2}$ and $P_{0} f =0$. Then, $f \in H^{s}$ for all $s$ and $f$ is an eigenvector of $P_{0}$ for the eigenvalue $0$. This point is impossible because $P_{0}$ has no eigenvalue. Thus $\Pi_{2} =0$ and $J=1$.

So $w (\lambda )$ has a zero of order $1$ at $\lambda =0$. Since $e_{\pm} (x ,0) = r(x) /r_{\pm}$, \eqref{a20} implies that the kernel of $\Pi_{1}$ is given by
\begin{equation}
\Pi_{1} (x, y) = \frac{1}{w'(0) r_{+} r_{-}} r (x) r(y) = i \gamma r(x) r(y) .
\end{equation}
Finally, since $i \varepsilon (P_{0} + \varepsilon^{2})^{-1} \to \Pi_{1}$ as $\varepsilon \to 0$ and since $P_{0} + \varepsilon^{2}$ is a strictly positive operator, we get $\< -i \Pi_{1} u , u \> \geq 0$ for all $u \in L^{2}_{\text{comp}}$. In particular, $-i i \gamma >0$ and then $\gamma \in ] 0, + \infty [$.

\Subsection{Estimate for $\lambda$ small in comparison to $\ell$.}

In this section, we give an upper bound for the cut-off resolvent for $\lambda \in [ R , \ell / R] + i [ - C_{0} ,C_{0} ]$. We assume that $\lambda \in [N, 2 N] + i [-C_{0} , C_{0} ]$ with $N \in [R , \ell / R]$, and define a new semi-classical parameter $h = N^{-1}$, a new spectral parameter $z = h^{2} \lambda^{2} \in [1/4 , 4]+ i[- 4 C_{0} h, 4 C_{0} h]$ and
\begin{equation} \label{a37}
\widetilde{P} = - h^{2} \Delta + h^{2} \ell (\ell + 1) V (x) + h^{2} W (x) .
\end{equation}
With these notations, we have
\begin{equation} \label{a38}
( P_{\ell} - \lambda^{2} )^{-1} = h^{2} ( \widetilde{P} - z)^{-1} .
\end{equation}
We remark that $\beta^{2} := h^{2} \ell (\ell + 1) \gg 1$ in our window of parameters.  The potentials $V$ and $W$ have a holomorphic extension in a sector
\begin{equation} \label{a28}
\Sigma = \{ x \in \C; \ \vert \im x \vert \leq \theta_{0} \vert \re x \vert \text{ and } \vert \re x \vert \geq C \} ,
\end{equation}
for some $C, \theta_{0} >0$. From the form of $\alpha^{2}$ (see \eqref{a2}), there exist $\kappa_{\pm} >0$ and functions $f_{\pm} \in C^{\infty} ( \R^{\pm} ; [1/C ,C])$, $C >0$, analytic in $\Sigma$ such that
\begin{equation}  \label{a36}
V (x) = e^{\mp \kappa_{\pm} x} f_{\pm} (x),
\end{equation}
for $x \in \Sigma$ and $\pm \re x >0$. Moreover, $f_{\pm}$ have a (non zero) limit for $x \to \pm \infty$, $x \in \Sigma$.

Under these hypotheses, and following Proposition 4.4 of \cite{SaZw97_01}, we can use the specific estimate developed by Zworski in \cite{Zw99_01} for operators like \eqref{a37} with $V$ satisfying \eqref{a36}. In the beginning of Section 4 of \cite{Zw99_01}, Zworski defines a subtle contour $\Gamma_{\theta}$ briefly described in the following figure.
\begin{figure}[!h]
\begin{picture}(0,0)%
\includegraphics{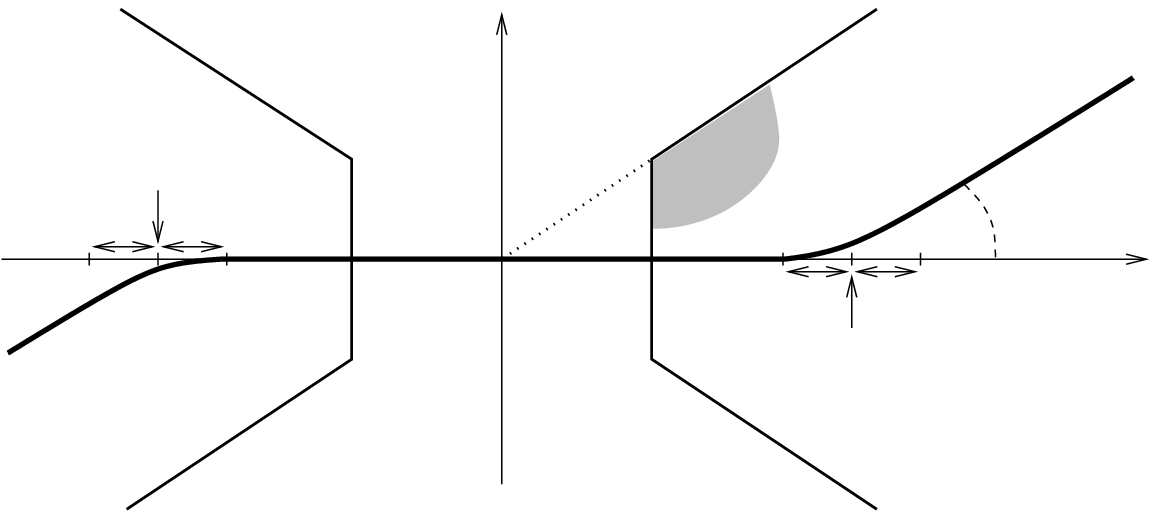}%
\end{picture}%
\setlength{\unitlength}{1579sp}%
\begingroup\makeatletter\ifx\SetFigFont\undefined%
\gdef\SetFigFont#1#2#3#4#5{%
  \reset@font\fontsize{#1}{#2pt}%
  \fontfamily{#3}\fontseries{#4}\fontshape{#5}%
  \selectfont}%
\fi\endgroup%
\begin{picture}(13844,6066)(-21,-5794)
\put(8251,-1861){\makebox(0,0)[lb]{\smash{{\SetFigFont{10}{12.0}{\rmdefault}{\mddefault}{\updefault}$\Sigma$}}}}
\put(9601,-3361){\makebox(0,0)[lb]{\smash{{\SetFigFont{10}{12.0}{\rmdefault}{\mddefault}{\updefault}$C_{1}$}}}}
\put(10426,-3361){\makebox(0,0)[lb]{\smash{{\SetFigFont{10}{12.0}{\rmdefault}{\mddefault}{\updefault}$C_{2}$}}}}
\put(2101,-2386){\makebox(0,0)[lb]{\smash{{\SetFigFont{10}{12.0}{\rmdefault}{\mddefault}{\updefault}$C_{1}$}}}}
\put(1276,-2386){\makebox(0,0)[lb]{\smash{{\SetFigFont{10}{12.0}{\rmdefault}{\mddefault}{\updefault}$C_{2}$}}}}
\put(976,-1711){\makebox(0,0)[lb]{\smash{{\SetFigFont{10}{12.0}{\rmdefault}{\mddefault}{\updefault}$- \frac{2}{\kappa_{-}} \ln \beta$}}}}
\put(9526,-4036){\makebox(0,0)[lb]{\smash{{\SetFigFont{10}{12.0}{\rmdefault}{\mddefault}{\updefault}$\frac{2}{\kappa_{+}} \ln \beta$}}}}
\put(6301,-211){\makebox(0,0)[lb]{\smash{{\SetFigFont{10}{12.0}{\rmdefault}{\mddefault}{\updefault}$x \in \C$}}}}
\put(7801,-1336){\rotatebox{33.7}{\makebox(0,0)[lb]{\smash{{\SetFigFont{10}{12.0}{\rmdefault}{\mddefault}{\updefault}$\im x = \theta_{0} \re x$}}}}}
\put(12076,-2311){\makebox(0,0)[lb]{\smash{{\SetFigFont{10}{12.0}{\rmdefault}{\mddefault}{\updefault}$\theta$}}}}
\put(7951,-3211){\makebox(0,0)[lb]{\smash{{\SetFigFont{10}{12.0}{\rmdefault}{\mddefault}{\updefault}$C$}}}}
\put(12301,-961){\makebox(0,0)[lb]{\smash{{\SetFigFont{10}{12.0}{\rmdefault}{\mddefault}{\updefault}$\Gamma_{\theta}$}}}}
\end{picture}%
\caption{The set $\Sigma$ and the contour $\Gamma_{\theta}$.}
\label{gamma}
\end{figure}

Recall that the distorted operator $\widetilde{P}_{\theta} = \widetilde{P}_{\vert_{\Gamma_{\theta}}}$ is defined by
\begin{equation}
\widetilde{P}_{\theta} u = (\widetilde{P} u)_{\vert_{\Gamma_{\theta}}}
\end{equation}
for all $u$ analytic in $\Sigma$ and then extended as a differential operator on $L^{2} (\Gamma_{\theta})$ by means of almost analytic functions. The resonances of $\widetilde{P}$ in the sector $S_{\theta} = \{ e^{-2 i s} r ; \ 0< s <\theta \text{ and } r \in ]0, + \infty [ \} = e^{2 i ]- \theta ,0]} ]0, + \infty [$ are then the eigenvalues of $\widetilde{P}_{\theta}$ in that set. For the general theory of resonances, see the paper of Sj\"{o}strand \cite{Sj97_01} or his book \cite{Sj07_01}.

For $\theta$ large enough, Proposition 4.1 of \cite{Zw99_01} proves that $\widetilde{P}$ has no resonances in $[1/4 , 4]+ i[- 4 C_{0} h, 4 C_{0} h]$. Moreover, for $z$ in that set, this proposition gives the uniform estimate
\begin{equation}
\Vert ( \widetilde{P}_{\theta} -z)^{-1} \Vert \leq C.
\end{equation}
Since $\Gamma_{\theta}$ coincides with $\R$ for $x \in \supp \chi$, we have
\begin{equation}
\chi ( \widetilde{P}- z)^{-1} \chi = \chi ( \widetilde{P}_{\theta} - z)^{-1} \chi ,
\end{equation}
from Lemma 3.5 of \cite{SjZw91_01}. Using \eqref{a37}, we immediately obtain
\begin{equation}
\Vert \chi ( P_{\ell} - \lambda^{2} )^{-1} \chi \Vert \leq \frac{C}{\< \lambda \>^{2}} ,
\end{equation}
which is exactly \eqref{a40}.

\Subsection{Estimate for $\lambda$ of order $\ell$.} \label{a65}

In this part, we study the cut-off resolvent for the energy $\lambda \in [ \ell / R , R \ell ] + i [ - C_{0} , C_{0} ]$. In that zone, we have to deal with the photon sphere. We define the new semi-classical parameter $h = (\ell (\ell +1))^{-1/2}$ and
\begin{equation}
\widetilde{P} = - h^{2} \Delta + V (x) + h^{2} W (x) .
\end{equation}
As previously, we have
\begin{equation}  \label{a33}
( P_{\ell} - \lambda^{2} )^{-1} = h^{2} ( \widetilde{P} - z)^{-1} ,
\end{equation}
where
\begin{equation}  \label{a34}
z = h^{2} \lambda^{2} \in [1/2 R^{2} , R^{2} ] + i [- 3 R C_{0} h ,0] = [a ,b] + i [-c , c] ,
\end{equation}
with $0< a < b$ and $0<c$. Note that $V$ is of the form:
\begin{figure}[!h]
\begin{picture}(0,0)%
\includegraphics{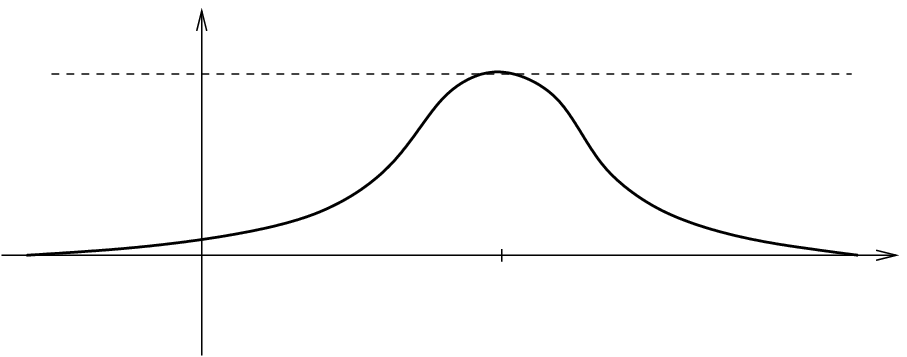}%
\end{picture}%
\setlength{\unitlength}{1579sp}%
\begingroup\makeatletter\ifx\SetFigFont\undefined%
\gdef\SetFigFont#1#2#3#4#5{%
  \reset@font\fontsize{#1}{#2pt}%
  \fontfamily{#3}\fontseries{#4}\fontshape{#5}%
  \selectfont}%
\fi\endgroup%
\begin{picture}(10844,4244)(1179,-3983)
\put(3751,-361){\makebox(0,0)[lb]{\smash{{\SetFigFont{10}{12.0}{\rmdefault}{\mddefault}{\updefault}$z_{0}$}}}}
\put(8551,-1411){\makebox(0,0)[lb]{\smash{{\SetFigFont{10}{12.0}{\rmdefault}{\mddefault}{\updefault}$V(x)$}}}}
\put(7051,-3136){\makebox(0,0)[lb]{\smash{{\SetFigFont{10}{12.0}{\rmdefault}{\mddefault}{\updefault}$x_{0}$}}}}
\put(11776,-2536){\makebox(0,0)[lb]{\smash{{\SetFigFont{10}{12.0}{\rmdefault}{\mddefault}{\updefault}$x$}}}}
\end{picture}%
\caption{The potential $V (x)$.}
\label{f1}
\end{figure}\newline
In particular, $V$ has a non-degenerate maximum at energy $z_{0} >0$. On the other hand, for $z \neq z_{0}$, $z >0$, the energy level $z$ is non trapping for $\widetilde{p}_{0} (x, \xi ) = \xi^{2} + V (x)$, the principal semi-classical symbol of $\widetilde{P}$. We defined $\widetilde{P}_{\theta}$ by standard distortion (see Sj\"{o}strand \cite{Sj97_01}) and can apply the following general upper bound on the cut-off resolvent in dimension one.

\begin{lemma}[{\bf Lemma 6.5 of \cite{BoMi04_01}}]\sl  \label{a29}
We assume that $n=1$ and that the critical points of $p_{0} (x, \xi )$ on the energy level $E_{0}$ are non-degenerate (i.e. the points $(x , \xi ) \in \widetilde{p}_{0} ( \{ E_{0} \} )$ such that $\nabla \widetilde{p}_{0} (x, \xi ) =0$ satisfy $\Hess \widetilde{p}_{0} (x, \xi )$ is invertible). Then, there exists $\varepsilon >0$ such that, for $E \in [E_{0} - \varepsilon , E_{0} + \varepsilon ]$ and $\theta = N h$ with $N>0$ large enough,
\begin{equation}  \label{a30}
\Vert (\widetilde{P}_{\theta} - z)^{-1} \Vert = \CO (h^{-M}) \prod_{\fract{z \in {\rm Res} \, \widetilde{P}}{\vert z -z_{j} \vert < \varepsilon \theta}} \frac{h}{\vert z - z_{j} \vert}
\end{equation}
for $\vert z -E \vert < \varepsilon \theta /2$ and some $M>0$ which depends on $N$.
\end{lemma}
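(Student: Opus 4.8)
The plan is to combine the abstract theory of complex distortion with an a priori bound on the distorted resolvent near the critical energy obtained through a Grushin reduction that already encodes the product over nearby resonances, followed by an elementary clean-up. From the general theory of complex scaling (Sj\"ostrand \cite{Sj97_01}) one knows that for $\theta = N h$ with $N$ fixed and $E$ in a fixed neighbourhood of $E_{0}$, the map $z \mapsto ( \widetilde{P}_{\theta} - z )^{-1}$ is a meromorphic family of bounded operators on a disc of radius $\asymp \theta$ about $E$, its poles being exactly the resonances $z_{j}$ of $\widetilde{P}$ in that disc, each with a residue of finite rank. Fix $\Omega = D ( E , \varepsilon \theta )$, with $\varepsilon$ to be shrunk finitely many times.

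The heart of the matter is the a priori estimate
\[
\big\Vert ( \widetilde{P}_{\theta} - z )^{-1} \big\Vert \leq \CO \big( h^{-N_{0}} \big) \prod_{z_{j} \in \Omega} \frac{1}{\vert z - z_{j} \vert} , \qquad z \in \Omega \setminus {\rm Res} \, \widetilde{P} ,
\]
for some $N_{0} = \CO_{N}(1)$. If $\re z$ is not a critical value of $\widetilde{p}_{0} (x , \xi ) = \xi^{2} + V (x)$ — which, by the non-degeneracy assumption, excludes only neighbourhoods of finitely many energies — that level is non-trapping, so there is no resonance in a strip of width $\asymp \theta$ (empty product) and a standard escape-function / positive-commutator argument yields even $\CO ( h^{-1} )$. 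Near a non-degenerate critical value one reduces microlocally to a normal form at each critical point of $\widetilde{p}_{0}$ — a model $\sim x \, h D_{x}$ at a hyperbolic point, such as the barrier top of Figure \ref{f1}, and a harmonic oscillator at an elliptic one — and sets up the associated Grushin problem with a perturbation of rank $k_{0} = \CO_{N}(1)$, so that
\[
( \widetilde{P}_{\theta} - z )^{-1} = E (z) - E_{+} (z) \, E_{-+} (z)^{-1} \, E_{-} (z) ,
\]
with $E , E_{\pm}$ holomorphic on $\Omega$ of norm $\CO ( h^{-C} )$ and $E_{-+} (z)$ a $k_{0} \times k_{0}$ matrix, holomorphic in $z$, vanishing exactly at the $z_{j} \in \Omega$. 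Writing $\det E_{-+} (z) = e (z) \prod_{z_{j} \in \Omega} ( z - z_{j} )$ with $e$ non-vanishing and $\vert e (z)^{\pm 1} \vert = \CO ( h^{-C} )$, Cramer's rule bounds $\Vert E_{-+} (z)^{-1} \Vert$ by $\CO ( h^{-C} ) \prod_{z_{j} \in \Omega} \vert z - z_{j} \vert^{-1}$, which gives the displayed estimate.

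To obtain \eqref{a30}, write $k := \# \{ j : z_{j} \in \Omega \} \leq k_{0}$, an $h$-independent bound since near a barrier top the resonances form a piece of a vertical lattice of spacing $\asymp h$, and split $\prod_{z_{j} \in \Omega}$ according to whether $\vert z - z_{j} \vert < \varepsilon \theta / 2$ or not, each factor of the second type being at most $2 / ( \varepsilon \theta ) = \CO ( h^{-1} )$. Since $\varepsilon \theta = \varepsilon N h$, this gives, for $\vert z - E \vert < \varepsilon \theta / 2$,
\[
\big\Vert ( \widetilde{P}_{\theta} - z )^{-1} \big\Vert \leq \CO \big( h^{-M} \big) \prod_{\fract{z_{j} \in {\rm Res} \, \widetilde{P}}{\vert z - z_{j} \vert < \varepsilon \theta}} \frac{h}{\vert z - z_{j} \vert} ,
\]
with $M = N_{0} + k$ depending on $N$, which is \eqref{a30} after renaming $\varepsilon$. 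Alternatively, starting only from the plain bound $\CO ( h^{-N_{0}} )$ on $\Omega$ at distance $\geq h$ from the resonances, one reaches the same conclusion by applying the maximum principle to the holomorphic operator $z \mapsto \prod_{z_{j} \in \Omega} ( z - z_{j} ) \, ( \widetilde{P}_{\theta} - z )^{-1}$ on two nested discs separated by an annulus free of resonances, of width $\gtrsim \theta$ by the normal-form picture.

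The main obstacle is the a priori estimate near the critical energy. The soft inputs — meromorphy and finite rank of residues from distortion theory, Cramer's rule, the maximum principle — and the non-trapping estimate away from $E_{0}$ are classical; but controlling $( \widetilde{P}_{\theta} - z )^{-1}$ on a full $\asymp \theta$-box straddling a non-degenerate barrier top requires the microlocal normal form at the hyperbolic fixed point and the associated Grushin reduction, i.e. in dimension one a precise WKB connection analysis across the barrier. A secondary point to check carefully is that the size $k_{0}$ of the Grushin matrix, hence the number of resonances in the box, is genuinely independent of $h$ — precisely where the non-degeneracy of the critical points enters.
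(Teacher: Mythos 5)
The paper does not prove this lemma: it states it verbatim as Lemma~6.5 of Bony--Michel \cite{BoMi04_01}, and the only original content surrounding it is the remark immediately after, correcting the cited reference: $M$ must depend on $N$, and the right-hand side of (6.18) in \cite{BoMi04_01}, namely $\CO(\ln(1/\theta))$, should read $\CO(\theta h^{-1}\ln(1/\theta))$. So there is no in-paper proof against which to check your proposal; the relevant comparison is with the proof in \cite{BoMi04_01} itself.

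As an outline, what you write does follow the same circle of ideas used there (and in Sj\"ostrand's barrier-top work): complex distortion with $\theta = N h$, finite-rank meromorphy of $(\widetilde{P}_{\theta}-z)^{-1}$, a Grushin reduction of size $\CO_N(1)$ localized at the non-degenerate critical points, Cramer's rule on $E_{-+}(z)^{-1}$, and final bookkeeping that splits the product over $z_j$ according to $\vert z-z_j\vert \lessgtr \varepsilon\theta/2$. Your $M$ then depends on $N$ through both $N_0$ and $k_0$ (you should get $M=N_0+2k$, not $N_0+k$, once the $h$'s reintroduced in the near factors are counted), which is in line with the paper's correction to the original statement.

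Two points that you acknowledge but leave open are precisely where the work is. First, the factorization $\det E_{-+}(z)=e(z)\prod_{z_j\in\Omega}(z-z_j)$ with $\vert e(z)^{\pm 1}\vert=\CO(h^{-C})$ does not come for free from the Grushin reduction: the \emph{upper} bound on $\vert\det E_{-+}\vert$ and on the adjugate is routine, but the \emph{lower} bound on $\vert e(z)\vert$ away from the $z_j$ requires a separate Jensen/Cartan or minimum-modulus argument (an upper bound on the number of zeros, an upper bound on the whole disc, and a good lower bound at one reference point); this is the technical heart of Lemma~6.5 of \cite{BoMi04_01}, and it is in the associated comparison-function construction that the error the paper corrects (equation (6.18) of \cite{BoMi04_01}) occurs. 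Second, your alternative maximum-principle route invokes an annulus ``free of resonances, of width $\gtrsim\theta$'': near a non-degenerate barrier top the resonances form a vertical lattice of spacing $\sim h$, while $\theta=Nh$ with $N$ large, so the resonance-free gaps are of size $\sim h$, not $\gtrsim\theta$; the maximum principle can still be used, but with a more careful choice of discs and comparison function than your sketch indicates.
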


\noindent
Note that there is a slight error in the statement of the lemma in \cite{BoMi04_01}. Indeed, $M$ depends on $N$, and in the proof of this lemma, the right hand side of (6.18), $\CO ( \ln (1/\theta))$, must be replaced by $\CO (\theta h^{-1} \ln ( 1/ \theta))$.

Recall that, from Proposition 4.3 \cite{SaZw97_01}, which is close to the work of Sj\"{o}strand \cite{Sj87_01} on the the resonances associated to a critical point, there exists an injective map $b(h)$ from
\begin{equation}  \label{a53}
\Gamma_{0} (h) = \big\{ \mu_{j} = z_{0} - i h \sqrt{\vert V''( x_{0} ) \vert /2} ( j +1/2) ; \ j \in \N_{0} \big\} ,
\end{equation}
into the set of resonances of $\widetilde{P}$ such that
\begin{equation} \label{a54}
b(h)(\mu ) - \mu = o (h), \ \mu \in \Gamma_{0}(h) ,
\end{equation}
and such that all the resonances in $[a/2 , 2b] + i [- c , c ]$ are in the image of $b(h)$. In particular, the number of resonances of $\widetilde{P}$ is bounded in $[a/2 , 2b] + i [- c , c]$. Furthermore, the operator $\widetilde{P}$ has no resonance in
\begin{equation*}
\Omega (h) =[a /2 , 2b] + i [- ( \mu_{0} - \varepsilon h) , ( \mu_{0} - \varepsilon h)] ,
\end{equation*}
for any $\varepsilon >0$ and $h$ small enough.

Using a compactness argument, we get \eqref{a30} for all $z \in [a ,b ] + i [- c , c]$. Thus, from \eqref{a33}, \eqref{a34}, $\chi (\widetilde{P} - z)^{-1} \chi = \chi (\widetilde{P}_{\theta} - z)^{-1} \chi$, the estimate $\< \lambda \> \lesssim h^{-1} = \sqrt{\ell ( \ell +1)} \lesssim \< \lambda \>$ for $\lambda \in [ \ell / R , R \ell ] + i [ - C_{0} , 0]$, Lemma \ref{a29} and the previous discussion, we get
\begin{equation}
\Vert \chi ( P_{\ell} - \lambda^{2} )^{-1} \chi \Vert \leq C \< \lambda \>^{C} \prod_{\fract{z_{j} \in {\rm Res} \, P}{\vert \lambda - \lambda_{j} \vert < 1}} \frac{1}{\vert \lambda - \lambda_{j} \vert} ,
\end{equation}
for $\lambda \in [ \ell / R , R \ell ] + i [ - C_{0} , C_{0}]$ and \eqref{a41} follows.

On the other hand, $\widetilde{P}$ has no resonance in $\Omega (h)$ and in that set
\begin{equation}  \label{a31}
\Vert \chi (\widetilde{P} - z)^{-1} \chi \Vert \lesssim \left\{
\begin{aligned}
& h^{-M} &&\text{on } \Omega (h) , \\
&\frac{1}{\vert \im z \vert} &&\text{on } \Omega (h) \cap \{ \im z > 0 \} .
\end{aligned} \right.
\end{equation}
We can now applied the following version of the so-called ``semi-classical maximum principle'' introduced by Tang--Zworski \cite{TaZw98_01}.

\begin{lemma}[{\bf Burq}]\sl \label{a35}
Suppose that $f(z,h)$ is a family of holomorphic functions defined for $0<h<1$ in a neighborhood of
\begin{equation*}
\Omega (h) = [a/2 , 2b] + i [- c h , c h] ,
\end{equation*}
with $0<a<b$ and $0<c$, such that
\begin{equation*}
\vert f (z,h) \vert \lesssim \left\{
\begin{aligned}
& h^{-M} &&\text{on } \Omega (h) , \\
&\frac{1}{\vert \im z \vert} &&\text{on } \Omega (h ) \cap \{ \im z > 0 \} .
\end{aligned} \right.
\end{equation*}
Then, there exists $h_{0}, C > 0$ such that, for any $0<h<h_{0}$,
\begin{equation}
\vert f(z,h) \vert \leq C \frac{\vert \ln h \vert}{h} e^{C \vert \im z \vert \vert \ln h \vert /h} ,
\end{equation}
for $z \in [a,b]+i [-c h, 0]$.
\end{lemma}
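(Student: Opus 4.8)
This is the ``semi-classical maximum principle'' of Tang--Zworski \cite{TaZw98_01} in the sharp form due to Burq, and the natural proof is a Hadamard three-lines estimate for the subharmonic function $\log|f(\cdot,h)|$, combined with the maximum principle on a thin sub-rectangle of $\Omega(h)$. The plan is to majorise $\log|f|$ by the harmonic function with the correct boundary values, the essential trick being \emph{where} to place the auxiliary horizontal line on which the bound $|f|\lesssim 1/|\im z|$ is exploited.

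First I would note that the hypothesis $|f(z,h)|\lesssim 1/|\im z|$ gives, on the horizontal line $\im z=\tau_h$ with $\tau_h:=ch/|\ln h|$, the bound $|f(z,h)|\lesssim |\ln h|/h$, whereas on the line $\im z=-ch$ one only has $|f(z,h)|\lesssim h^{-M}$. The point of the choice of $\tau_h$ is that it lies in the upper half-box (so the good bound applies), it is close to the real axis, and the good bound on it is worse than $1/h$ only by the sub-polynomial factor $|\ln h|$. Then set $\phi=\log|f(\cdot,h)|$, which is subharmonic on $\Omega(h)$, and work on $B_h=[a/2,2b]+i[-ch,\tau_h]\subset\Omega(h)$. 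On $\partial B_h$ one has $\phi\le\log(C|\ln h|/h)=:A_+$ on the top edge $\{\im z=\tau_h\}$, and $\phi\le M|\ln h|+C=:A_-$ on the bottom edge and on the two vertical edges (using $|f|\lesssim h^{-M}$, together with $|f|\lesssim 1/|\im z|$ on the part of the vertical edges with $\im z>0$).

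Let $H$ be the harmonic function on $B_h$ equal to $A_+$ on the top edge and $A_-$ on the other three edges; by the maximum principle $\phi\le H$ on $B_h$. For $z$ with $\re z\in[a,b]$, standard estimates for harmonic measure in a thin rectangle show that the harmonic measure of the two vertical edges, seen from $z$, is $\CO(e^{-c_0/h})$ for some $c_0>0$, because $[a,b]$ lies at distance $\gtrsim 1$ from those edges while $B_h$ has height $\CO(h)$; hence, for such $z$,
\[
H(z)=A_-\,\frac{\tau_h-\im z}{\tau_h+ch}+A_+\,\frac{\im z+ch}{\tau_h+ch}+\CO(e^{-c_0/h}).
\]
When $z\in[a,b]+i[-ch,0]$ the weight of $A_-$ is $\dfrac{\tau_h+|\im z|}{\tau_h+ch}\le\dfrac{1}{|\ln h|}+\dfrac{|\im z|}{ch}$, and since $A_--A_+\le (M-1)|\ln h|+C$ this yields
\[
H(z)\le A_++(A_--A_+)\Big(\tfrac{1}{|\ln h|}+\tfrac{|\im z|}{ch}\Big)\le |\ln h|+\ln|\ln h|+\frac{M-1}{ch}\,|\im z|\,|\ln h|+C .
\]
Exponentiating, $|f(z,h)|\le e^{H(z)}\le C\,\dfrac{|\ln h|}{h}\,e^{C|\im z||\ln h|/h}$ with $C$ depending only on $M$ and $c$, which is the assertion; it is harmless to assume $M\ge 2$ and to take $h_0$ small so that $\tau_h<ch$ and $|\ln h|>1$.

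The one step I would be careful about is the side-boundary estimate: one has to make precise that $\log|f|$, restricted to a vertical segment inside $[a,b]$, effectively does not feel the vertical edges of $B_h$, i.e. that the harmonic measure of those edges decays exponentially in $1/h$; this is classical (separation of variables in the rectangle, or an elementary barrier), and it is exactly why the conclusion is asserted on $[a,b]$ and not on all of $[a/2,2b]$. I expect this bookkeeping, rather than the three-lines computation itself, to be the fiddly part. The only genuinely clever ingredient is the scale $\tau_h\sim h/|\ln h|$ of the auxiliary line: it is the minimiser of $\ln(1/\tau)+M|\ln h|\,\tau/h$, whose minimal value $\sim|\ln h|$ produces the sharp prefactor $|\ln h|/h$, whereas the more obvious choice $\tau_h\sim h$ would only give a merely polynomial prefactor $h^{-(M+c)/(1+c)}$ on the real axis.
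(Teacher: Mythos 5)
The paper does not actually reproduce a proof of this lemma; it simply cites Lemma~4.7 of Burq's Duke paper and moves on. So there is no ``paper proof'' to compare against verbatim, but your argument is correct and self-contained, and it is worth noting where it sits relative to the standard semi-classical maximum principle arguments in this circle of ideas.

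Your proof is sound. The three essential ingredients are all in place: (1)~the auxiliary horizontal line at height $\tau_h = ch/|\ln h|$, on which the hypothesis $|f|\lesssim 1/|\im z|$ gives $|f|\lesssim |\ln h|/h$; (2)~the comparison $\phi=\log|f|\le H$, with $H$ the harmonic majorant on the thin rectangle $B_h$ whose top/bottom values are $A_+=\CO(|\ln h|)$ and $A_-=M|\ln h|+\CO(1)$; and (3)~the observation that, away from the vertical edges, $H$ is the linear interpolation in $\im z$ up to a correction controlled by the harmonic measure of those edges, which is exponentially small ($\CO(e^{-c_0/h})$) because the rectangle has height $\CO(h)$ while $[a,b]$ sits at distance $\gtrsim 1$ from the sides. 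Your subsequent arithmetic then gives exactly the claimed bound. One small cosmetic point: the correction to $H$ is $(A_- - A_+)\,\CO(e^{-c_0/h}) = \CO(|\ln h|\,e^{-c_0/h})$, not quite $\CO(e^{-c_0/h})$ as you wrote, but this is absorbed by taking $c_0$ slightly smaller, so it does not affect the conclusion.

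Where your route differs slightly from the route this paper (and Tang--Zworski, and Burq) tend to take: instead of quantifying harmonic measure directly, the usual device is to \emph{multiply} $f$ by an auxiliary holomorphic function $g(z,h)$ that is $\gtrsim 1$ on $[a,b]+i[-ch,0]$, $\lesssim 1$ on the whole rectangle, and $\lesssim h^{M}$ near the vertical edges (e.g.\ built from Gaussians $e^{-(z-z_k)^2/(Ch|\ln h|)}$), and then to apply the maximum principle to $\log|fg| - C\,\im z\,|\ln h|/h$; this is precisely what is done in the proof of Lemma~\ref{a21} in this paper, and it side-steps explicit harmonic-measure estimates at the cost of having to construct $g$. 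Your version replaces the construction of $g$ by the classical exponential decay of harmonic measure in a thin strip, which is arguably more transparent and makes the optimality of the scale $\tau_h\sim h/|\ln h|$ visible by direct minimisation. Both approaches are equivalent in content; your closing remark that $\tau_h\sim h$ only yields the polynomial prefactor $h^{-(M+c)/(1+c)}$ is a correct and illuminating sanity check on why the logarithmic scale is necessary.
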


\noindent
This lemma is strictly analogous to Lemma 4.7 of the paper of Burq \cite{Bu04_01}. Combining \eqref{a33}, \eqref{a34}, $\< \lambda \> \lesssim h^{-1} \lesssim \< \lambda \>$ with this lemma, we obtain
\begin{equation}
\Vert \chi (P_{\ell} - \lambda^{2})^{-1} \chi \Vert \leq C \frac{\ln \< \lambda \>}{\< \lambda \>} e^{C \vert \im \lambda \vert \ln \< \lambda \>} ,
\end{equation}
for $\lambda \in [ \ell / R , R \ell ] + i [ - \varepsilon , 0]$, for some $\varepsilon >0$.

\Subsection{Estimate for the very large values of $\lambda$.}

Here, we study the resolvent for $\vert \lambda \vert \gg \ell$. More precisely, we assume that
\begin{equation*}
\lambda \in [ N , 2N] + i [- C \ln N , C_{0}] ,
\end{equation*}
for some $C>0$ fixed and $N \gg \ell$. We define the new semi-classical parameter $h = N^{-1}$ and
\begin{equation*}
z = h^{2} \lambda^{2} \in h^{2} [N^{2} /2 , 4 N^{2}] + i h^{2} [ - 4 C N \ln N ,4 C_{0} N^{-1}] \subset [ a , b ] + i [ - c h \vert \ln h \vert , c h] ,
\end{equation*}
for some $0<a<b$ and $0<c$. Then, $P_{\ell}$ can be written
\begin{equation*}
P_{\ell} - \lambda^{2} =  h^{-2} ( \widetilde{P} - z ),
\end{equation*}
where
\begin{equation*}
\widetilde{P} = - h^{2} \Delta + \mu V (x) + \nu W (x) ,
\end{equation*}
with $\mu = \ell (\ell +1) h^{2}$, $\nu = h^{2}$. For $N \gg \ell$, the coefficients $\mu , \nu$ are small, and the operator $\widetilde{P}$ is uniformly non trapping for $z \in [a,b]$. We can expect a uniform bound of the cut-off resolvent in $[ a , b ] + i [ - c h \vert \ln h \vert ,0]$. Such a result is proved in the following lemma.

\begin{lemma}\sl  \label{a21}
For all $\chi \in C^{\infty}_{0} (\R)$, there exist $\mu_{0} , \nu_{0} , h_{0}, C>0$ such that, for all $\mu < \mu_{0}$, $\nu < \nu_{0}$ and $h < h_{0}$, $\widetilde{P}$ has no resonance in $[ a , b ] + i [ c h \ln h , c h]$. Moreover
\begin{equation}  \label{a22}
\Vert \chi (\widetilde{P} - z)^{-1} \chi \Vert \leq \frac{C}{h} e^{C \vert \im z \vert / h} ,
\end{equation}
for all $z \in [ a , b ] + i [ - c h \vert \ln h \vert , c h]$.
\end{lemma}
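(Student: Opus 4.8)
The plan is to undo the rescaling \eqref{a33}--\eqref{a34} only at the very end, so it suffices to study $\widetilde{P}=-h^{2}\Delta+\mu V+\nu W$ directly, and to prove that for $\mu<\mu_{0}$, $\nu<\nu_{0}$, $h<h_{0}$ it has no resonance in a \emph{fixed} box $[a',b']+i[-\delta_{0},\delta_{0}]$ (with $a'<a<b<b'$ and $\delta_{0}$ fixed, so that this box contains $[a,b]+i[ch\ln h,ch]$ once $h$ is small), together with $\Vert\chi(\widetilde{P}-z)^{-1}\chi\Vert\lesssim h^{-1}e^{C|\im z|/h}$ there. The guiding observation, already flagged in the discussion of zone {\it IV}, is that $\mu V+\nu W$ is uniformly small and exponentially decaying, and, by \eqref{a36}, that $V$ and $W$ extend holomorphically to the sector $\Sigma$; hence $\widetilde{P}$ behaves essentially like the free operator $-h^{2}\Delta$, which already satisfies such a bound (its one-dimensional cut-off resolvent is $\CO(h^{-1})$ on $[a',b']$ and grows like $h^{-1}e^{C|\im z|/h}$ as one continues into the lower half-plane).

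First I would record the classical input. For $\mu,\nu$ small and $z\in[a',b']$ one has $z>\mu\sup V+\nu\sup W$, so the symbol $\xi^{2}+\mu V(x)+\nu W(x)$ has no critical point on the shell $\{=z\}$ and this energy is non-trapping. A positive-commutator (Mourre) estimate with the generator of dilations $A$ --- for which $i[\widetilde{P},A]=2\widetilde{P}+\CO(\mu+\nu)$, using the exponential decay of $V,W$ and of their derivatives --- then gives the limiting absorption principle $\sup_{E\in[a,b]}\Vert\chi(\widetilde{P}-(E\pm i0))^{-1}\chi\Vert\lesssim h^{-1}$, uniformly in $\mu<\mu_{0}$, $\nu<\nu_{0}$ and $h<h_{0}$.

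Next I would pass to the lower half-plane by complex distortion, exactly as Zworski does in \cite{Zw99_01} and as in the previous subsections: choose a contour $\Gamma_{\theta}$ with a \emph{fixed} small angle $\theta<\theta_{0}$, equal to $\R$ near $\supp\chi$ and entering $\Sigma$ for $|\re x|$ large, define $\widetilde{P}_{\theta}$ (the region near the origin, where $V,W$ are only $C^{\infty}$, being handled by almost analytic extensions), so that $\chi(\widetilde{P}-z)^{-1}\chi=\chi(\widetilde{P}_{\theta}-z)^{-1}\chi$ (Lemma 3.5 of \cite{SjZw91_01}) yields the meromorphic continuation and identifies resonances with eigenvalues of $\widetilde{P}_{\theta}$. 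The distorted free operator $\widetilde{P}^{0}_{\theta}$ has spectrum in $e^{-2i\theta}[0,+\infty[$, hence no eigenvalue in $[a',b']+i[-\delta_{0},\delta_{0}]$ for $\delta_{0}$ small; and since the classical flow is non-trapping at these energies and $\widetilde{P}_{\theta}-\widetilde{P}^{0}_{\theta}$ is a small, exponentially localized perturbation, the standard resolvent estimates for complex-distorted non-trapping Schr\"odinger operators give that $\widetilde{P}_{\theta}$ too has no eigenvalue there, and a crude a priori bound $\Vert\chi(\widetilde{P}-z)^{-1}\chi\Vert=\CO(e^{C/h})$ in the box. Finally I would combine the real-axis bound $\CO(h^{-1})$, the trivial self-adjoint bound $\Vert\chi(\widetilde{P}-z)^{-1}\chi\Vert\le1/|\im z|$ for $\im z>0$, and this crude bound, and feed them into a version of the semi-classical maximum principle in the spirit of Lemma \ref{a35} and of Tang--Zworski \cite{TaZw98_01}, now applied in a strip of \emph{logarithmic} width $\sim ch|\ln h|$; the interpolation produces $\Vert\chi(\widetilde{P}-z)^{-1}\chi\Vert\lesssim h^{-1}e^{C|\im z|/h}$ for $z\in[a,b]+i[ch\ln h,ch]$, which is \eqref{a22} and in particular excludes resonances there. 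Undoing \eqref{a33}--\eqref{a34} gives the statement for $P_{\ell}$.

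The main obstacle is precisely the passage into a strip of logarithmic width rather than one of width $\CO(h)$. A naive Neumann series off $-h^{2}\Delta$ only reaches width $\CO(h)$: there the exponential decay of $\mu V+\nu W$ is defeated by the exponential growth of the outgoing solution, and $\mu$ is a fixed small constant, not $\CO(e^{-c/h})$; going down to $ch|\ln h|$ genuinely uses the non-trapping structure, accessed here through the fixed-angle complex distortion. This is exactly where the holomorphic extendability \eqref{a36} of $V$ and $W$ enters --- the ingredient unavailable in the Schwarzschild metric for the analogous low-frequency zones. Everything else --- the classical non-trapping verification, the Mourre estimate, the bookkeeping of the distortion near the origin, the maximum-principle interpolation, and undoing the rescalings --- is routine and parallels the treatment of zones {\it II} and {\it III}.
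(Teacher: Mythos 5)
Your three-step architecture is exactly the paper's: a Mourre/LAP bound $\CO(h^{-1})$ on the spectral axis, an a~priori bound on the continued cut-off resolvent in a complex region, and a semi-classical maximum principle to interpolate. The LAP and maximum-principle steps are fine and match the paper (the auxiliary function and the three-lines argument do tolerate an $e^{C/h}$ input provided it holds in a box of \emph{fixed} depth $\delta_{0}$, since the weight $e^{-C'\im z/h}$ with $C'\delta_{0}=C$ kills the exponential on the bottom edge, as you implicitly arrange).

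The genuine gap is in your middle step. You invoke ``the standard resolvent estimates for complex-distorted non-trapping Schr\"odinger operators'' to conclude at one stroke both that $\widetilde{P}_{\theta}$ has no eigenvalue in $[a',b']+i[-\delta_{0},\delta_{0}]$ and that $\Vert\chi(\widetilde{P}-z)^{-1}\chi\Vert=\CO(e^{C/h})$ there, essentially because ``$\widetilde{P}_{\theta}-\widetilde{P}^{0}_{\theta}$ is a small, exponentially localized perturbation.'' This does not follow from perturbation theory: the distorted free resolvent $(\widetilde{P}^{0}_{\theta}-z)^{-1}$ is a non-self-adjoint resolvent whose norm at depth $\delta_{0}$ in the lower half-plane is itself $\sim e^{C\delta_{0}/h}$, so multiplying by $\mu+\nu$ does not make the perturbation series converge unless $\mu+\nu=\CO(e^{-C/h})$ --- a fact you acknowledge in your last paragraph, but then do not replace with an actual argument. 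What the paper uses here is Martinez \cite{Ma02_01}: with the distortion angle chosen proportional to $h|\ln h|$ (not fixed), one gets a resonance-free zone of depth $\CO(h|\ln h|)$ with the \emph{polynomial} bound $\Vert\chi(\widetilde{P}_{\theta}-z)^{-1}\chi\Vert\lesssim h^{-C}$, which is precisely the regime adapted to a potential that is only $C^{\infty}$ near the origin and analytic only in a sector at infinity. Whether a fixed-angle distortion yields a resonance-free region of \emph{fixed} depth with an $e^{C/h}$ bound for such a potential is not a statement you can take off the shelf, and it is not what the cited zone-{\it IV} references establish.

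Moreover, even granting the existence of some result of the type you want, the \emph{uniformity in $\mu$ and $\nu$} is the whole point of the lemma and is the part that actually requires work. The paper is explicit that one must re-run Martinez's proof ``from line to line'' to see that the constants do not degenerate as $\mu,\nu\to0$ (nor blow up for $\mu,\nu$ in a fixed range); your proposal asserts this uniformity in the LAP step but never addresses it in the distortion step, where it is least obvious. Replacing the vague appeal to ``standard estimates'' by a concrete, uniform version of Martinez's polynomial bound --- or by a genuine fixed-angle argument with the uniformity checked --- is what is needed to close the proof.
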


\noindent
Assume first Lemma \ref{a21}. For $\lambda \in [ N , 2N] + i [- C \ln N ,C_{0}]$, we have
\begin{align*}
\Vert \chi (P_{\ell} - \lambda^{2})^{-1} \chi \Vert =& \Vert h^{2} \chi ( \widetilde{P} - z )^{-1} \chi \Vert \\
\leq& C h e^{C \vert \im z \vert / h}  \\
\leq& \frac{C}{\vert \lambda \vert} e^{4 C \vert \im \lambda \vert} ,
\end{align*}
and the estimate \eqref{a43} follows.

\begin{proof}[Proof of Lemma \ref{a21}]
For $\mu$ and $\nu$ small and fixed, the estimate \eqref{a22} is already know. The proof can be found in the book of Va{\u\i}nberg \cite{Va89_01} in the classical case and in the paper of Nakamura--Stefanov--Zworski \cite{NaStZw03_01} in our semi-classical setting. To obtain Lemma \ref{a21}, we only have to check the uniformity (with respect to $\mu$ and $\nu$) in the proof of \cite[Proposition 3.1]{NaStZw03_01}.

\noindent
{\it $\bullet$ Limiting absorption principle.} \newline
The point is to note that
\begin{equation}
A = x h D_{x} + h D_{x} x ,
\end{equation}
is a conjugate operator for all $\mu , \nu \ll 1$. Let $g \in C^{\infty}_{0} ([a/3, 3b] ; [0,1])$ be equal to $1$ near $[a/2,2b]$. The operator $g(\widetilde{P}) A g(\widetilde{P})$ is well defined on $D (A)$, and its closure, $\CA$, is self-adjoint. The operator $\widetilde{P}$ is of class $C^{2} ( \CA )$. Recall that $\widetilde{P}$ is of class $C^{r} ( \CA )$ if there exists $z \in \C \setminus \sigma ( \widetilde{P} )$ such that
\begin{equation*}
\R \ni t \to e^{it \CA} (\widetilde{P}  - z)^{-1} e^{-it \CA} ,
\end{equation*}
is $C^{r}$ for the strong topology of $L^{2}$ (see \cite[Section 6.2]{AmBoGe96_01} for more details).

We have
\begin{equation}
i h^{-1} [ \widetilde{P} , A ] = 4 \widetilde{P} - 4 \mu V - 4 \nu W - 2 \mu x V' - 2 \nu x W' .
\end{equation}
In particular, for $\mu$ and $\nu$ small enough, we easily obtain
\begin{equation}
1_{[a/2 , 2b]} (\widetilde{P}) i [ \widetilde{P} , \CA ] 1_{[a/2 , 2b]} (\widetilde{P}) \geq a h 1_{[a/2 , 2b]} (\widetilde{P}) .  \label{a26}
\end{equation}
Note that this Mourre estimate is uniform with respect to $\mu , \nu$.

It is also easy to check that
\begin{equation}   \label{a27}
\begin{gathered}
\Vert \< x \>^{-1} \CA \Vert \leq C \\
\Vert (\widetilde{P} +i)^{-1} [\widetilde{P} , A] \Vert \leq C h    \\
\Vert (\widetilde{P} +i)^{-1} [ [\widetilde{P} , A] ,A] \Vert \leq C h^{2} \\
\Vert (\widetilde{P} +i)^{-1} [\widetilde{P} , [ \widetilde{P}  ,A]] \Vert \leq C h^{2} \\
\Vert (\widetilde{P} +i)^{-1} A [\widetilde{P} , [ \widetilde{P}  ,A]] \Vert \leq C h^{2} ,
\end{gathered}
\end{equation}
uniformly in $\mu , \nu$.

The regularity $\widetilde{P} \in C^{2} ( \CA )$, the Mourre estimate \eqref{a26} and the upper bound \eqref{a27} are the key assumptions for the limiting absorption principle. In particular, from, by example, the proof of Proposition 3.2 in \cite{AlBoRa07_01} which is an adaptation of the theorem of Mourre \cite{Mo81_01}, we obtain the following estimate: For $\alpha >1/2$, there exist $\mu_{0} , \nu_{0} , h_{0} ,C >0$, such that
\begin{equation}
\Vert \< x \>^{- \alpha} (\widetilde{P} - z)^{-1} \< x \>^{- \alpha} \Vert \leq C h^{-1} ,
\end{equation}
for all $\mu < \mu_{0}$, $\nu < \nu_{0}$, $h < h_{0}$ and $z \in [a/2 , 2b] + i ] 0 , c h ]$. In particular,
\begin{equation}  \label{a25}
\Vert \chi (\widetilde{P} - z)^{-1} \chi \Vert \leq C h^{-1} .
\end{equation}
for $z \in [a/2 , 2b] + i [ 0 , c h ]$.

\noindent
{\it $\bullet$ Polynomial estimate in the complex.} \newline
The second point of the proof is to obtain a polynomial bound of the distorted resolvent. To obtain such bounds, we use the paper of Martinez \cite{Ma02_01}. In this article, the author studies the resonance of $Q = - h \Delta + \widetilde{V} (x)$ where $\widetilde{V}$ is a $C^{\infty} ( \R^{n})$ function which can be extended analytically in a domain like $\Sigma$ (see \eqref{a28}) and decays in this domain. If the energy level $z_{0}$ is non trapped for the symbol $q (x, \xi ) = \xi^{2} + \widetilde{V} (x)$, the operator $Q$ has no resonance in $[ z_{0} - \delta , z_{0} + \delta ] + i [ A h \ln h ,0]$ for a $\delta$ small enough and any $A >0$. Moreover,
\begin{equation}  \label{a23}
\Vert (Q_{\theta} - z)^{-1} \Vert \leq C h^{-C}
\end{equation}
for $z \in [ z_{0} - \delta , z_{0} + \delta ] + i [ A h \ln h ,0]$. Here $Q_{\theta}$ denotes the distorted operator outside of a large ball of angle $\theta = B h \vert \ln h \vert$, with $B \gg A$.

Of course, $\widetilde{P}$ satisfies the previous assumption on $Q$, for $\mu$ and $\nu$ fixed small enough. But, following the proof of \eqref{a23} in \cite[Section 4]{Ma02_01} from line to line, one can prove that \eqref{a23} is uniformly true for $\mu , \nu \ll 1$. This means that there exist $\mu_{0} , \nu_{0} , h_{0} ,C >0$ such that
\begin{equation} \label{a24}
\Vert \chi (\widetilde{P} -z)^{-1} \chi \Vert = \Vert \chi (\widetilde{P}_{\theta} -z)^{-1} \chi \Vert \leq C h^{-C} ,
\end{equation}
for all $\mu < \mu_{0}$, $\nu < \nu_{0}$, $h < h_{0}$ and $z \in [ a/2 , 2b ] + i [ c h \ln h ,0]$.

\noindent
{\it $\bullet$ Semi-classical maximum principle.} \newline
To finish the proof, we use a version of the semi-classical maximum principle. This argument can be found in \cite[Proposition 3.1]{NaStZw03_01}, but we give it for the convenience of the reader.

We can construct a holomorphic function $f (z,h)$ with the following properties:
\begin{align*}
&\vert f \vert \leq C \quad \text{ for } z \in [ a/2 , 2b ] + i [ c h \ln h ,0] , \\
&\vert f \vert \geq 1 \quad \text{ for } z \in [ a , b ] + i [ c h \ln h ,0] ,  \\
&\vert f \vert \leq h^{M} \quad \text{ for } z \in [ a/2 , 2b ] \setminus [ 2a/3 , 3b/2 ] + i [ c h \ln h ,0] ,
\end{align*}
where $M$ is the constant $C$ given in \eqref{a24}. We can then apply the maximum principle in $[ a/2 , 2b ] + i [ c h \ln h ,0]$ to the subharmonic function
\begin{equation*}
\ln \Vert \chi (\widetilde{P} -z)^{-1} \chi \Vert + \ln \vert f (z,h) \vert - C \frac{\im z}{h} ,
\end{equation*}
proving the lemma with \eqref{a25} and \eqref{a24}.
\end{proof}

\section{Proof of the main theorem}

\Subsection{Resolvent estimates for $L_{\ell}$.}

The cut-off resolvent estimates for $P_{\ell}$ give immediately cut-off resolvent estimates for $L_{\ell}$.

\begin{proposition}\sl  \label{prop2.4}
Let $\chi\in C_0^{\infty}(\R)$. Then the operator $\chi(L_{\ell}-\lambda)^{-1}\chi$ sends ${\mathcal E}_{\ell}^{\rm mod}$ into itself and we have uniformly in $\ell$:
\begin{equation}  \label{REL}
\Vert\chi(L_{\ell}-z)^{-1}\chi\Vert_{{\mathcal L}({\mathcal E}_{\ell}^{\rm mod})} \lesssim \langle z \rangle \Vert \chi(P_{\ell} - z^2)^{-1} \chi \Vert
\end{equation}
\end{proposition}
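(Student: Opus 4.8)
The plan is to deduce the bound directly from the resolvent identity \eqref{RL}. Restricting \eqref{RL} to the spherical sector, applying it to $(u_0,u_1)$, cutting off, and using $(P_\ell-z^2)^{-1}P_\ell=I+z^2(P_\ell-z^2)^{-1}$ on the Sobolev scale of $P_\ell$, one obtains
\begin{equation*}
\chi(L_\ell-z)^{-1}\chi(u_0,u_1)=\Big(\,z\chi(P_\ell-z^2)^{-1}\chi u_0+i\chi(P_\ell-z^2)^{-1}\chi u_1\,,\ -i\chi^2u_0-iz^2\chi(P_\ell-z^2)^{-1}\chi u_0+z\chi(P_\ell-z^2)^{-1}\chi u_1\,\Big).
\end{equation*}
Since the ${\mathcal E}^{\rm mod}_\ell$-norm of a pair $(w_0,w_1)$ is $\|w_1\|^2+\langle P_\ell w_0,w_0\rangle+\|w_0\|^2_{L^2(0,1)}$ and $\|w_0\|_{L^2(0,1)}\le\|w_0\|$, it is enough to estimate in $L^2$ the second component above and the quantity $\langle P_\ell w_0,w_0\rangle^{1/2}$ of the first. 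I write $R_\chi(z)=\chi(P_\ell-z^2)^{-1}\chi$, and where convenient I replace one copy of $\chi$ by a fixed $\psi\in C^\infty_0(\R)$ with $\psi\equiv1$ on $\supp\chi$; this does not affect the bounds of Theorem \ref{a44}, so it is harmless for the use we make of the proposition.

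Two facts, uniform in $\ell$, are the backbone. First, a localization estimate $\|\chi u_0\|_{L^2}+\|D_xu_0\|_{L^2}+(\ell(\ell+1))^{1/2}\|\chi u_0\|_{L^2}\lesssim\|u\|_{{\mathcal E}^{\rm mod}_\ell}$: for $\ell\ge1$ it rests on the elementary fact, visible from \eqref{a2} and \eqref{a8}, that the potential of $P_\ell$ is $\ell(\ell+1)V+W=V\big(\ell(\ell+1)+r\partial_r\alpha^2\big)\ge0$ on $]r_-,r_+[$ — the bracket decreases in $r$ and equals $\ell(\ell+1)+6M/r_+-2\ge6M/r_+>0$ at $r_+$ — so that $\langle P_\ell u_0,u_0\rangle\ge\|D_xu_0\|^2$, and (since $V\ge c_\chi>0$ on $\supp\chi$) also $\langle P_\ell u_0,u_0\rangle\gtrsim\ell(\ell+1)\|\chi u_0\|^2$ uniformly in $\ell\ge1$, the remaining $\|\chi u_0\|\lesssim\|u\|$ following by a one-dimensional trace inequality; for $\ell=0$ one argues from the definition of ${\mathcal E}^{\rm mod}_0$ together with the exponential decay of $W$. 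Second, $[P_\ell,\chi]=[D_x^2,\chi]$ is a first-order operator supported in $\supp\chi$ and independent of $\ell$, and $\|D_xv\|^2\le\langle P_\ell v,v\rangle+C\|v\|^2$ with $C$ independent of $\ell$.

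Granting these, the terms $z\chi(P_\ell-z^2)^{-1}\chi u_j$ and $\chi(P_\ell-z^2)^{-1}\chi u_1$ are $\lesssim\langle z\rangle\|R_\chi(z)\|\,\|u\|_{{\mathcal E}^{\rm mod}_\ell}$ by the crude bound $\|\chi(P_\ell-z^2)^{-1}\chi f\|\le\|R_\chi(z)\|\|f\|$ together with $\|\chi u_j\|\lesssim\|u\|_{{\mathcal E}^{\rm mod}_\ell}$. The delicate term is $z^2\chi(P_\ell-z^2)^{-1}\chi u_0$, for which a naive estimate would cost $\langle z\rangle^2\|R_\chi(z)\|$; one recovers a power of $\langle z\rangle$ by writing $z^2\chi(P_\ell-z^2)^{-1}\chi u_0=\chi(P_\ell-z^2)^{-1}P_\ell\chi u_0-\chi^2u_0$ and estimating $\chi(P_\ell-z^2)^{-1}P_\ell\chi u_0$ by duality: for $w\in L^2$, self-adjointness of $P_\ell$ gives
\begin{equation*}
\big|\langle\chi(P_\ell-z^2)^{-1}P_\ell\chi u_0,w\rangle\big|=\big|\langle P_\ell\chi u_0,\psi(P_\ell-\bar z^2)^{-1}\chi w\rangle\big|\le\langle P_\ell\chi u_0,\chi u_0\rangle^{1/2}\big\langle P_\ell\,\psi(P_\ell-\bar z^2)^{-1}\chi w,\psi(P_\ell-\bar z^2)^{-1}\chi w\big\rangle^{1/2},
\end{equation*}
where the first factor is $\lesssim\|u\|_{{\mathcal E}^{\rm mod}_\ell}$ by the localization estimate and a commutator, and the second — the ${\mathcal H}^1_\ell$-norm of a cut-off resolvent applied to $w$ — is $\lesssim\langle z\rangle\|R_\chi(z)\|\,\|w\|$ via the commutator identity $P_\ell\psi(P_\ell-\bar z^2)^{-1}\chi w=\psi\chi w+\bar z^2\psi(P_\ell-\bar z^2)^{-1}\chi w+[D_x^2,\psi](P_\ell-\bar z^2)^{-1}\chi w$, iterated through a finite chain of cutoffs and closed at each step by $\|D_xv\|^2\le\langle P_\ell v,v\rangle+C\|v\|^2$. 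The quantity $\langle P_\ell w_0,w_0\rangle^{1/2}$ of the first component is handled by the same commutator expansion $\langle P_\ell w_0,w_0\rangle=\langle\chi(z\chi u_0+i\chi u_1),w_0\rangle+z^2\|w_0\|^2+\langle[D_x^2,\chi](P_\ell-z^2)^{-1}(z\chi u_0+i\chi u_1),w_0\rangle$, reducing it to the $L^2$-bounds just obtained. Near $z=0$ none of this is needed, since there $\|R_\chi(z)\|\sim1/|z|$ and the crude bounds already produce $\langle z\rangle\|R_\chi(z)\|$. The main obstacle is precisely this bookkeeping: trading the two powers of $z$ in $z^2(P_\ell-z^2)^{-1}$ for one derivative on $u_0$ (paid for by the energy norm) plus one ``derivative of the cut-off resolvent'' (costing only $\langle z\rangle\|R_\chi(z)\|$), while keeping every constant independent of $\ell$.
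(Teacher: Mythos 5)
Your proposal takes a genuinely different route from the paper. The paper starts from the same matrix identity \eqref{RL}, but then reduces the claim to three mapping properties of $\chi(P_\ell-z^2)^{-1}\chi$ between the spaces in the $P$-Sobolev scale (estimates (2.4.1)--(2.4.3) in the proof), passes to $H^2$-type endpoint estimates by complex interpolation, and closes the argument with the double-commutator identity
$P_{\ell}\chi (P_{\ell} - z^2)^{-1} \chi u=\widetilde{\chi}(P_{\ell}+i)^{-1}[P_{\ell},[P_{\ell},\chi]] (P_{\ell} - z^2)^{-1} \chi u+\widetilde{\chi}(P_{\ell}+i)^{-1}[P_{\ell},\chi] (P_{\ell} - z^2)^{-1} (P_{\ell}+i)\chi u+\chi (P_{\ell} - z^2)^{-1} P_{\ell}\chi u$, using that $(P_\ell+i)^{-1}[P_\ell,[P_\ell,\chi]]$ is uniformly bounded. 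You instead estimate each matrix entry directly in the energy quadratic form: the $z^2$-term by duality combined with Cauchy--Schwarz for the nonnegative form $\langle P_\ell\cdot,\cdot\rangle$, and the energy $\langle P_\ell w_0,w_0\rangle$ of the first component by the commutator expansion. The essential mechanism is the same in both proofs --- trade $z^2(P_\ell-z^2)^{-1}$ for $P_\ell(P_\ell-z^2)^{-1}-I$ and then move one $P_\ell^{1/2}$ onto the data --- but you avoid interpolation altogether by working with the quadratic form itself. Your observation that the potential of $P_\ell$ is nonnegative for $\ell\ge 1$ is nice but stronger than needed: the divergence form $P_\ell=r^{-1}D_xr^2D_xr^{-1}+\ell(\ell+1)V$ in \eqref{a70} already gives $P_\ell\ge 0$ for every $\ell\ge 0$, which is what the Cauchy--Schwarz step actually requires, and it covers $\ell=0$ at the same time (your treatment of $\ell=0$ is otherwise a bit hand-wavy).

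There is one genuine gap, however, in the step you describe as ``iterated through a finite chain of cutoffs and closed at each step by $\|D_xv\|^2\le\langle P_\ell v,v\rangle+C\|v\|^2$.'' Write $v=(P_\ell-\bar z^2)^{-1}\chi w$. Each application of the inequality $\|D_x(\psi_k v)\|^2\lesssim\langle P_\ell\psi_k v,\psi_k v\rangle+\|\psi_k v\|^2$ replaces the quantity you want to control ($\langle P_\ell\psi_0v,\psi_0v\rangle$) by the same quantity with a slightly wider cutoff, and the recursion $f_k\lesssim M+\epsilon\,f_{k+1}$ does not terminate on its own, since the $f_k$'s are not a priori bounded by $M$. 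The clean way to close it is to note that $\langle P_\ell\psi v,\psi v\rangle$ is real and that
\begin{equation*}
\re\big\langle [D_x^2,\psi]\,v,\,\psi v\big\rangle=\|\psi' v\|^2\ge 0,
\end{equation*}
which makes the commutator contribution harmless in a single step (it is a cut-off resolvent applied to $w$, hence $\lesssim\|R\|\,\|w\|$, with no extra power of $z$). With that observation, your expansion $\langle P_\ell\psi v,\psi v\rangle=\re\langle\chi w,\psi v\rangle+\re(\bar z^2)\|\psi v\|^2+\|\psi'v\|^2$ gives the claimed bound directly and the chain of cutoffs is not needed. The paper's $(P_\ell+i)^{-1}$ insertion plays the analogous closing role there. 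As written, your argument does not justify the termination of the iteration and should be repaired along these lines (or by borrowing the paper's $(P_\ell+i)^{-1}$ device).
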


\begin{proof}
Using Theorem \ref{a44}, \eqref{RL}, the equivalence of the norms ${\mathcal E}_{a,b}^{\rm mod}$ as well as the fact that we can always replace $u$ by $\widetilde{\chi}u,\, \widetilde{\chi}\in C_0^{\infty}(\R),\, \widetilde{\chi}\chi=\chi$ we see that it is sufficient to show:
\begin{align}
\label{2.4.1}
\Vert\chi(P_{\ell}-z^2)^{-1}\chi u \Vert_{H^1} &\lesssim \Vert\widetilde{\chi}(P_{\ell}-z^2)^{-1}\widetilde{\chi}\Vert \Vert u\Vert_{H^1},\\
\label{2.4.2}
\Vert\chi(P_{\ell}-z^2)^{-1}\chi u \Vert_{H^1} &\lesssim \langle z \rangle\Vert\widetilde{\chi}(P_{\ell}-z^2)^{-1}\widetilde{\chi}\Vert \Vert u\Vert_{L^2},\\
\label{2.4.3}
\Vert\chi(P_{\ell}-z^2)^{-1}P_{\ell}\chi u \Vert_{L^2} &\lesssim \langle z \rangle\Vert\widetilde{\chi}(P_{\ell}-z^2)^{-1}\widetilde{\chi}\Vert \Vert u\Vert_{H^1}.
\end{align}
Using complex interpolation we see that it is sufficient to show:
\begin{align}
\label{2.4.4}
\Vert\chi(P_{\ell}-z^2)^{-1}\chi u \Vert_{H^2} \lesssim& \Vert\widetilde{\chi}(P_{\ell}-z^2)^{-1}\widetilde{\chi}\Vert \Vert u\Vert_{H^2},\\
\label{2.4.5}
\Vert\chi(P_{\ell}-z^2)^{-1}\chi u \Vert_{H^2} \lesssim& \langle z\rangle^2\Vert\widetilde{\chi}(P_{\ell}-z^2)^{-1}\widetilde{\chi}\Vert \Vert u\Vert_{L^2},\\
\label{2.4.6}
\Vert\chi(P_{\ell}-z^2)^{-1}P_{\ell}\chi u \Vert_{L^2} \lesssim& \Vert\widetilde{\chi}(P_{\ell}-z^2)^{-1}\widetilde{\chi}\Vert \Vert u\Vert_{H^2}.
\end{align}
We start with (\ref{2.4.6}) which follows from 
\begin{eqnarray*}
\chi (P_{\ell} - z^2)^{-1} P_{\ell}\chi=\chi (P_{\ell} - z^2)^{-1} \chi P_{\ell}+\chi (P_{\ell} - z^2)^{-1} {[}P_{\ell},\chi{]}u.
\end{eqnarray*}
Let us now observe that
\begin{align*}
P_{\ell}\chi (P_{\ell} - z^2)^{-1} \chi u =& {[}P_{\ell},\chi{]} (P_{\ell} - z^2)^{-1} \chi+\chi (P_{\ell} - z^2)^{-1} P_{\ell}\chi u\\
=&\widetilde{\chi}(P_{\ell}+i)^{-1}{[}P_{\ell},{[}P_{\ell},\chi{]}{]} (P_{\ell} - z^2)^{-1} \chi u\\
&+\widetilde{\chi}(P_{\ell}+i)^{-1}{[}P_{\ell},\chi{]} (P_{\ell} - z^2)^{-1} (P_{\ell}+i)\chi u+\chi (P_{\ell} - z^2)^{-1} P_{\ell}\chi u.
\end{align*}
From this identity we obtain (\ref{2.4.4}) and (\ref{2.4.5}) using (\ref{2.4.6}) (for (\ref{2.4.4})) and that $(P_{\ell}+i)^{-1}{[}P_{\ell},{[}P_{\ell},\chi{]}{]}$ is uniformly bounded.
\end{proof}

\Subsection{Resonance expansion for the wave equation.}

For the proof of the main theorem we follow closely the ideas of Va{\u\i}nberg \cite[Chapter X.3]{Va89_01}. If ${\mathcal N}$ is a Hilbert space we will note $L^2_{\nu}(\R; {\mathcal N} )$ the space of all functions $v(t)$ with values in ${\mathcal N}$ such that $e^{-\nu t}v(t)\in L^2(\R; {\mathcal N} )$.
Let $u\in {\mathcal E}^{\rm mod}_{\ell}$ and
\begin{eqnarray*}
v(t)=\left\{\begin{array}{cc} e^{-itL_{\ell}}u& t\ge0,\\ 0 & t<0. \end{array}\right.
\end{eqnarray*}
Then $v\in L^2_{\nu}(\R;{\mathcal E}_{\ell})$ for all $\nu>0$. We can define 
\begin{eqnarray*}
\tilde{v}(k)=\int_0^{\infty}v(t)e^{ikt}dt
\end{eqnarray*}
as an element of ${\mathcal E}$ for all $k$ with $\im k>0$. The function $\tilde{v}$ depends analytically on $k$ when $\im k>0$. Also, on the line $\im k=\nu$ the function belongs to $L^2(\R; {\mathcal E}_{\ell})$. We have the inversion formula:
\begin{eqnarray*}
v(t)=\frac{1}{2\pi}\int_{-\infty+i\nu}^{\infty+i\nu}e^{-ik t}\tilde{v}(k) \, d k
\end{eqnarray*}
and the integral converges in $L^2_{\nu}(\R;{\mathcal E}_{\ell})$ for all $\nu>0$.
From the functional calculus we know that 
\begin{eqnarray*}
\tilde{v}(k)= - i (L_{\ell} -k)^{-1}u
\end{eqnarray*}
for all $k$ with $\im k>0$. We therefore obtain for all $t\ge0$:
\begin{eqnarray}
\label{3.3}
e^{-itL}u=\frac{1}{2 \pi i} \int_{-\infty+i\nu}^{\infty+i\nu}(L_{\ell} -k)^{-1}e^{-ikt}u \, d k,
\end{eqnarray}
where the integral is convergent in $L^2_{\nu}(\R;{\mathcal E}_{\ell})$. In the following, we denote $\widehat{R}_{\chi}^{\ell} ( k)$ the meromorphic extension of $\chi (L_{\ell} -k)^{-1} \chi$.

\begin{lemma}\sl  \label{a61}
Let $\chi \in C^{\infty}_{0} ( \R)$, $N\geq 0$. Then, there exist bounded operators $B_{j} \in {\mathcal L}({\mathcal E}_{\ell}^{{\rm mod} ,-q};$ ${\mathcal E}_{\ell}^{{\rm mod} ,-j-q})$, $j = 0 , \ldots ,N$, $q \in \N_{0}$ and $B \in {\mathcal L}({\mathcal E}_{\ell}^{{\rm mod} ,-q} ;{\mathcal E}_{\ell}^{{\rm mod}, -N -1 -q})$, $q \in \N_{0}$ such that
\begin{equation}
\widehat{R}_{\chi}^{\ell} ( k) = \sum_{j =0}^{N} \frac{1}{( k - i ( \nu +1))^{j+1}} B_{j} + \frac{1}{( k - i ( \nu +1))^{N +1}} B \widehat{R}_{\widetilde{\chi}}^{\ell} (k) \chi ,
\end{equation}
for some $\widetilde{\chi} \in C^{\infty}_{0} (\R)$ with $\chi \widetilde{\chi} = \chi$.
\end{lemma}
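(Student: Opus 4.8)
The plan is to iterate the resolvent identity for $L_\ell$ in order to trade decay of the integrand in \eqref{3.3} (via powers of $(k-i(\nu+1))^{-1}$) against regularity loss in the energy scale. First I would fix a cutoff $\widetilde\chi \in C_0^\infty(\R)$ with $\widetilde\chi\chi = \chi$, and choose $z_0 = i(\nu+1)$ in the upper half plane, where $(L_\ell - z_0)^{-1}$ is bounded on ${\mathcal E}_\ell^{\rm mod}$ (this uses the bound $\Vert e^{-itL_\ell}\Vert \leq C$ uniformly in $\ell\geq 1$ from Bachelot--Motet-Bachelot, and the corresponding $e^{k|t|}$ bound for $\ell = 0$, so that $(L_\ell - z)^{-1}$ exists for $\im z$ large). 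The basic identity I would use is
\begin{equation*}
\chi (L_\ell - k)^{-1}\chi = \chi(L_\ell - z_0)^{-1}\chi + (k - z_0)\,\chi(L_\ell - z_0)^{-1}(L_\ell - k)^{-1}\chi ,
\end{equation*}
which after inserting intermediate cutoffs and commuting $\chi$ past $(L_\ell - z_0)^{-1}$ (picking up commutator terms of the form $(L_\ell - z_0)^{-1}[L_\ell,\chi](L_\ell - z_0)^{-1}$, which map ${\mathcal E}_\ell^{{\rm mod},-q}$ into ${\mathcal E}_\ell^{{\rm mod},-q}$ with a gain because $[L_\ell,\chi]$ is lower order relative to $L_\ell - z_0$) gives the $N=0$ case: a leading term $\frac{1}{k - z_0}B_0$ with $B_0 = \chi(L_\ell - z_0)^{-1}\chi(L_\ell - z_0) \cdots$ bounded ${\mathcal E}_\ell^{{\rm mod},-q}\to{\mathcal E}_\ell^{{\rm mod},-q}$ (in fact one can arrange $B_0$ loses no derivatives, or at worst one), plus a remainder $\frac{1}{k-z_0}B\,\widehat R^\ell_{\widetilde\chi}(k)\chi$ with $B$ mapping ${\mathcal E}_\ell^{{\rm mod},-q}\to{\mathcal E}_\ell^{{\rm mod},-1-q}$.

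The induction step is then to apply the same identity to the factor $\widehat R^\ell_{\widetilde\chi}(k)\chi = \widetilde\chi(L_\ell - k)^{-1}\widetilde\chi\chi$ appearing in the $N$-th remainder, producing one more power of $(k-z_0)^{-1}$, a new bounded operator $B_{N+1}$ contributing to the sum, and a new remainder carrying an extra factor of $(L_\ell - z_0)^{-1}$ (hence one more unit of regularity loss, ${\mathcal E}_\ell^{{\rm mod},-q}\to{\mathcal E}_\ell^{{\rm mod},-(N+2)-q}$). One must be slightly careful with the cutoffs: at each stage I would use a new $\widetilde\chi_j$ with $\widetilde\chi_{j+1}\widetilde\chi_j = \widetilde\chi_j$, and at the end absorb the finite chain of nested cutoffs into a single $\widetilde\chi$ with $\widetilde\chi\chi = \chi$. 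The algebra closes because each factor $(L_\ell - z_0)^{-1}$ combined with a commutator $[L_\ell,\chi_j]$ (which is first order, so one order below $L_\ell$) is bounded ${\mathcal E}_\ell^{{\rm mod},-q}\to{\mathcal E}_\ell^{{\rm mod},-q}$, while a bare factor $(L_\ell - z_0)^{-1}$ costs exactly one step on the Sobolev scale; the bookkeeping of which terms gain and which lose is where the precise mapping properties $B_j\in{\mathcal L}({\mathcal E}_\ell^{{\rm mod},-q};{\mathcal E}_\ell^{{\rm mod},-j-q})$ come from.

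The main obstacle, and the point requiring care, is verifying uniformity in $\ell$ of all these operator bounds. This rests on: (a) the uniform equivalence (for $\ell\geq 1$) of ${\mathcal E}_\ell^{\rm mod}$ and ${\mathcal E}_\ell$ and the uniform bound on $e^{-itL_\ell}$ stated in the text, so that $\Vert(L_\ell - z_0)^{-1}\Vert_{{\mathcal L}({\mathcal E}_\ell^{\rm mod})}$ is bounded uniformly; (b) the fact that $[L_\ell,\chi]$ and $[L_\ell,[L_\ell,\chi]]$ involve only $\partial_x$ and $\partial_x^2$ hitting $\chi$ (no $\Delta_\omega$), hence $(P_\ell + i)^{-1}[P_\ell,[P_\ell,\chi]]$ is uniformly bounded in $\ell$ — exactly the observation already used in the proof of Proposition \ref{prop2.4}. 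Granting these, the operators $B_j$ and $B$ are built from finitely many compositions of such uniformly-bounded pieces, so the constants do not depend on $\ell$. The $\ell = 0$ case is handled identically (no uniformity needed there, it is a single operator), with $z_0$ chosen with imaginary part $> k$ so that $(L_0 - z_0)^{-1}$ exists. I expect no genuine difficulty beyond this careful bookkeeping; the statement is essentially an operator-algebraic reformulation of "differentiating under the integral sign" adapted to the non-self-adjoint Sobolev scale ${\mathcal E}_\ell^{{\rm mod},-j}$.
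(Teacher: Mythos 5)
Your starting identity is not the one that does the work here, and as written it cannot produce the $\frac{1}{k-i(\nu+1)}$ prefactor you then claim appears. You invoke the second resolvent identity
\begin{equation*}
\chi(L_\ell - k)^{-1}\chi = \chi(L_\ell - z_0)^{-1}\chi + (k-z_0)\,\chi(L_\ell - z_0)^{-1}(L_\ell - k)^{-1}\chi ,
\end{equation*}
but this has a factor $(k-z_0)$ that \emph{grows} at infinity; nothing in it decays like $\frac{1}{k-z_0}$. The identity the paper actually uses is the multiplicative one
\begin{equation*}
(L_\ell - k)^{-1} + \frac{1}{k - i(\nu+1)} = \frac{1}{k - i(\nu+1)}\,(L_\ell - i(\nu+1))\,(L_\ell - k)^{-1} ,
\end{equation*}
in which the resolvent is multiplied by the differential operator $(L_\ell - i(\nu+1))$ rather than by its inverse. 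This is not a cosmetic difference: the whole point of the lemma, as recorded in the mapping property $B \in {\mathcal L}({\mathcal E}_\ell^{{\rm mod},-q};{\mathcal E}_\ell^{{\rm mod},-N-1-q})$ and used in \eqref{3.4}, is to \emph{lose} $N+1$ units of regularity in exchange for $\langle k\rangle^{-(N+1)}$ decay. The factor $(L_\ell - i(\nu+1))$ loses one unit; your proposed factor $(L_\ell - z_0)^{-1}$ gains one unit and thus cannot yield the stated mapping properties. The later appearance of ``$B_0 = \chi(L_\ell-z_0)^{-1}\chi(L_\ell-z_0)\cdots$'' suggests the intended operator and its inverse are both floating around in your argument without a clear algebraic structure that actually produces the expansion.

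A second, more minor point: no commutator argument is needed to insert the intermediate cutoffs. Since $L_\ell$ is a local (differential) operator and $\widetilde\chi \equiv 1$ on $\supp\chi$, one simply has $\chi(L_\ell - i(\nu+1))(L_\ell - k)^{-1}\chi = \chi(L_\ell - i(\nu+1))\widetilde\chi\,\cdot\,\widetilde\chi(L_\ell - k)^{-1}\widetilde\chi\,\cdot\,\chi$ exactly, i.e.\ $B = \chi(L_\ell - i(\nu+1))\widetilde\chi$ and the remainder carries $\widehat R^\ell_{\widetilde\chi}(k)$ directly, with $B_0 = -\chi^2$. Your observation about uniformity in $\ell$ (that $[L_\ell,\chi]$ involves only $\partial_x$ and that $(L_\ell - z_0)^{-1}$ is uniformly bounded) is correct and relevant, but the structural part of your argument needs to be replaced by the correct identity before it can close.
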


\begin{proof}
We proceed by induction over $N$. For $N=0$, we write
\begin{equation*}
(L_{\ell} - k)^{-1} + \frac{1}{k-i(\nu+1)} = \frac{1}{k-i(\nu+1)}(L_{\ell} - i(\nu +1))(L_{\ell} -k)^{-1}.
\end{equation*}
and choose $B_{0} = - \chi^{2}$. Then
\begin{equation}  \label{a60}
\widehat{R}_{\chi}^{\ell} ( k) - \frac{1}{k-i(\nu+1)} B_{0} = \frac{1}{k-i(\nu+1)} \widetilde{B}_{\chi , \widetilde{\chi}} \widehat{R}_{\widetilde{\chi}}^{\ell} (k) \chi ,
\end{equation}
where $\widetilde{B}_{\chi , \widetilde{\chi}} = \chi (L_{\ell} - i(\nu +1)) \widetilde{\chi}$, with $\chi = \chi \widetilde{\chi}$, is in the space ${\mathcal L}({\mathcal E}_{\ell}^{m, -q} ;{\mathcal E}_{\ell}^{m,-1-q})$.

Let us suppose that the lemma is proved for $N \geq 0$. We put
\begin{equation}
B_{N+1} = \frac{1}{(k - i ( \nu +1))^{N+1}} B \widetilde{\chi}^{2} \chi
\end{equation}
Using \eqref{a60}, we get
\begin{align}
\widehat{R}_{\chi}^{\ell} ( k) =& \sum_{j =0}^{N} \frac{1}{( k - i ( \nu +1))^{j+1}} B_{j} + \frac{1}{( k - i ( \nu +1))^{N +1}} B \widehat{R}_{\widetilde{\chi}}^{\ell} \chi  \nonumber \\
=& \sum_{j =0}^{N+1} \frac{1}{( k - i ( \nu +1))^{j+1}} B_{j} + \frac{1}{( k - i ( \nu +1))^{N +2}} B  \widetilde{B}_{\widetilde{\chi} , \widetilde{\widetilde{\chi}}}  \widehat{R}_{\widetilde{\widetilde{\chi}}}^{\ell} \chi ,
\end{align}
with $\widetilde{\widetilde{\chi}} \in C^{\infty}_{0} (\R)$ with $\widetilde{\widetilde{\chi}} \widetilde{\chi} = \widetilde{\chi}$. This proves the lemma.
\end{proof}

Let us define 
\begin{equation*}
\widetilde{R}_{\chi}^{\ell} (k) = \widehat{R}_{\chi}^{\ell} ( k) - \sum_{j =0}^{1} \frac{1}{( k - i ( \nu +1))^{j+1}} B_{j}.
\end{equation*}
Then, Lemma \ref{a61} implies
\begin{eqnarray}
\label{3.4}
\Vert \widetilde{R}_{\chi}^{\ell} (k) \Vert_{{\mathcal L}({\mathcal E}^{\rm mod}_{\ell};{\mathcal E}_{\ell}^{{\rm mod},-2})} \lesssim \frac{1}{\langle k \rangle^{2}} \Vert \widehat{R}_{\chi}^{\ell} (k) \Vert_{{\mathcal L}({\mathcal E}_{\ell}^{\rm mod} ;{\mathcal E}_{\ell}^{\rm mod})}.
\end{eqnarray}
Now observe that 
\begin{equation}
\int_{-\infty+i\nu}^{\infty+i\nu}\frac{B_{j}}{(k-i(\nu+1) )^{-j-1}}e^{-ikt}dk=0.
\end{equation}
Therefore (\ref{3.3}) becomes:
\begin{eqnarray*}
\chi e^{-i t L} \chi u= \frac{1}{2 \pi i} \int_{-\infty+i\nu}^{\infty+i\nu} \widetilde{R}_{\chi}^{\ell} (k) e^{-ikt} u \, d k ,
\end{eqnarray*}
where the previous integral is absolutely convergent in ${\mathcal L}({\mathcal E}^{\rm mod}_{\ell};{\mathcal E}_{\ell}^{{\rm mod},-2})$.

We first show part $(i)$ of the theorem. Integrating along the path
indicated in Figure \ref{paf} we obtain by the Cauchy theorem:
\begin{figure}[!h]
\begin{center}
\begin{picture}(0,0)%
\includegraphics{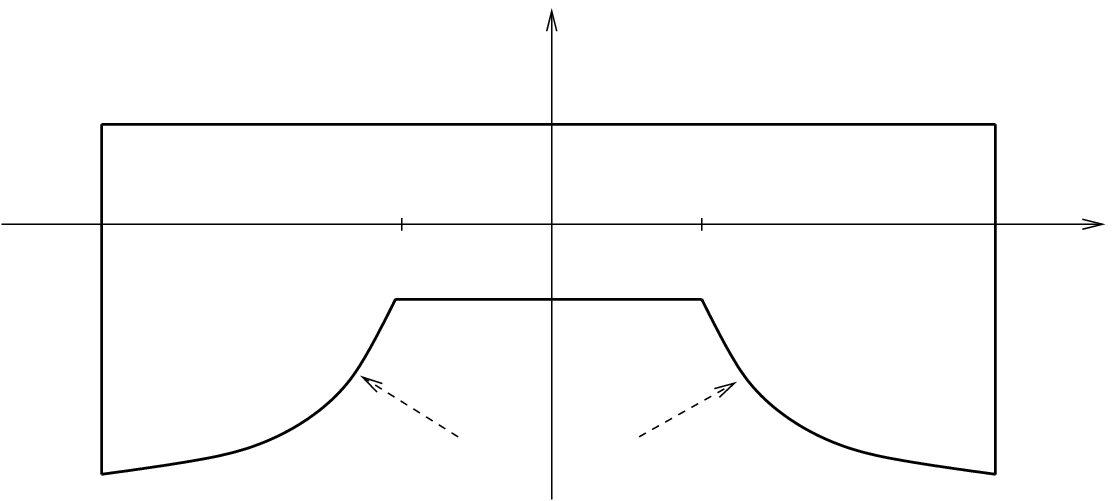}%
\end{picture}%
\setlength{\unitlength}{1579sp}%
\begingroup\makeatletter\ifx\SetFigFont\undefined%
\gdef\SetFigFont#1#2#3#4#5{%
  \reset@font\fontsize{#1}{#2pt}%
  \fontfamily{#3}\fontseries{#4}\fontshape{#5}%
  \selectfont}%
\fi\endgroup%
\begin{picture}(13319,5969)(-621,-4883)
\put(3601,-4561){\makebox(0,0)[lb]{\smash{{\SetFigFont{10}{12.0}{\rmdefault}{\mddefault}{\updefault}$\im k = - \ln \< \vert \re k \vert - R \ell \> - \mu$}}}}
\put(5551,-661){\makebox(0,0)[lb]{\smash{{\SetFigFont{10}{12.0}{\rmdefault}{\mddefault}{\updefault}$\nu$}}}}
\put(5326,-2236){\makebox(0,0)[lb]{\smash{{\SetFigFont{10}{12.0}{\rmdefault}{\mddefault}{\updefault}$- \mu$}}}}
\put(10726,-2461){\makebox(0,0)[lb]{\smash{{\SetFigFont{10}{12.0}{\rmdefault}{\mddefault}{\updefault}$\Gamma_{3}$}}}}
\put(751,-2461){\makebox(0,0)[lb]{\smash{{\SetFigFont{10}{12.0}{\rmdefault}{\mddefault}{\updefault}$\Gamma_{5}$}}}}
\put(6676,-2911){\makebox(0,0)[lb]{\smash{{\SetFigFont{10}{12.0}{\rmdefault}{\mddefault}{\updefault}$\Gamma_{1}$}}}}
\put(8851,239){\makebox(0,0)[lb]{\smash{{\SetFigFont{10}{12.0}{\rmdefault}{\mddefault}{\updefault}$k\in \C$}}}}
\put(2551,-136){\makebox(0,0)[lb]{\smash{{\SetFigFont{10}{12.0}{\rmdefault}{\mddefault}{\updefault}$\Gamma$}}}}
\put(2401,-3811){\makebox(0,0)[lb]{\smash{{\SetFigFont{10}{12.0}{\rmdefault}{\mddefault}{\updefault}$\Gamma_{4}$}}}}
\put(9076,-3811){\makebox(0,0)[lb]{\smash{{\SetFigFont{10}{12.0}{\rmdefault}{\mddefault}{\updefault}$\Gamma_{2}$}}}}
\put(11476,-1336){\makebox(0,0)[lb]{\smash{{\SetFigFont{10}{12.0}{\rmdefault}{\mddefault}{\updefault}$X$}}}}
\put(-149,-1336){\makebox(0,0)[lb]{\smash{{\SetFigFont{10}{12.0}{\rmdefault}{\mddefault}{\updefault}$- X$}}}}
\put(3676,-1336){\makebox(0,0)[lb]{\smash{{\SetFigFont{10}{12.0}{\rmdefault}{\mddefault}{\updefault}$- R \ell$}}}}
\put(7576,-1261){\makebox(0,0)[lb]{\smash{{\SetFigFont{10}{12.0}{\rmdefault}{\mddefault}{\updefault}$R \ell$}}}}
\end{picture}%
\end{center}
\caption{The paths $\Gamma_{j}$.}
\label{paf}
\end{figure}
\begin{equation}
\frac{1}{2 \pi i} \int_{-X+i\nu}^{X+i\nu} e^{-ikt} \widetilde{R}_{\chi}^{\ell} (k) u \, d k =\sum_{\fract{\lambda_j\in {\rm Res} \, P_{\ell}}{\im \lambda_j>-\mu}}\sum_{k=0}^{m(\lambda_j)}e^{-i\lambda_j t}t^k \pi_{j,k}^{\chi} u + \sum_{j=1}^5 \frac{1}{2 \pi i} \int_{\Gamma_j}e^{-it\lambda}\widetilde{R}_{\chi}^{\ell} (\lambda) \, d\lambda.  \label{3.5}
\end{equation}
Let $I_j= \frac{1}{2 \pi i} \int_{\Gamma_j}e^{-it\lambda}\widetilde{R}_{\chi}^{\ell} (\lambda) \, d\lambda$. We have, for $t$ large enough,
\begin{align}
\Vert I_3\Vert_{{\mathcal E}_{\ell}^{{\rm mod},-2}}&\lesssim \int_{X-i\ln \langle X \rangle}^{X+i\nu}
\Vert e^{-ist}\widetilde{R}_{\chi}^{\ell} (s) u \Vert_{{\mathcal E}_{\ell}^{{\rm mod} ,-2}} \, d s  \nonumber \\
\label{3.8}
&\lesssim \int_{-\ln \langle X \rangle}^{\nu} \frac{1}{\langle X \rangle^{2}} e^{(t - C)s} d s \, \Vert u\Vert_{{\mathcal E}^{{\rm mod},-2}_{\ell}} \lesssim \frac{e^{t \nu}}{t} X^{-2}\Vert u \Vert_{{\mathcal E}^{{\rm mod}}_{\ell}}.
\end{align}
We now take the limit $X$ goes to $+ \infty$ in the ${\mathcal L}({\mathcal E}^{\rm mod}_{\ell};{\mathcal E}_{\ell}^{{\rm mod},-2})$ sens in \eqref{3.5}. The integrals $I_{3}$ and $I_{5}$ go to $0$ thanks to \eqref{3.8} and, in the integrals $I_2$ and $I_4$, the paths $\Gamma_{\bullet}$ are replaced by paths which extend $\Gamma_{\bullet}$ in a natural way and which go to $\infty$. We note them again $\Gamma_{\bullet}$. We remark that
\begin{equation}
\int_{\Gamma_{4} \cup \Gamma_{1} \cup \Gamma_{2}} \frac{B_{j}}{(k-i(\nu+1) )^{-j-1}}e^{-ikt}dk=0,
\end{equation}
where the integral is absolutely convergent in ${\mathcal L}({\mathcal E}^{\rm mod}_{\ell};{\mathcal E}_{\ell}^{{\rm mod},-2})$. On the other hand, we have the estimate, for $t$ large enough,
\begin{align}
\Vert I_1 \Vert_{{\mathcal E}_{\ell}^{{\rm mod}}} &\lesssim \int_{-R \ell}^{R \ell}\Vert e^{-\mu t}\widehat{R}_{\chi}^{\ell} (s-i\mu) u \Vert_{{\mathcal E}_{\ell}^{{\rm mod}}} \, d s \nonumber   \\
&\lesssim e^{-\mu t}\int_{-R \ell}^{R \ell} \langle s \rangle^{C \mu} d s \, \Vert u\Vert_{{\mathcal E}_{\ell}^{\rm mod}}\lesssim e^{-\mu t} \ell^{C\mu} \Vert u \Vert_{{\mathcal E}_{\ell}^{\rm mod}},  \label{a62}  \\
\Vert I_2 \Vert_{{\mathcal E}_{\ell}^{{\rm mod}}} &\lesssim \int_0^{+ \infty} \Big\Vert e^{-i(R \ell + s-i(\mu+\ln \langle s \rangle))t}\widehat{R}_{\chi}^{\ell} (R \ell +s-i(\mu+\ln \langle s \rangle ))u \Big\Vert_{\mathcal{E}_{\ell}^{{\rm mod}}} d s  \nonumber \\
&\lesssim \int_0^{\infty}e^{-\mu t}e^{-\ln \langle s \rangle t}e^{C(\ln \langle s \rangle+\mu)} \, d s \, \Vert u\Vert_{{\mathcal E}_{\ell}^{\rm mod}}\lesssim e^{-\mu t}\Vert u \Vert_{{\mathcal E}_{\ell}^{\rm mod}}  ,  \label{a63}
\end{align}
and a similar estimate holds for $I_{4}$. Since all these estimates hold in ${\mathcal L} ({\mathcal E}_{\ell}^{{\rm mod}})$, \eqref{a62} and \eqref{a63} give the estimate of the rest \eqref{a68} with $M= C \mu /2$. The estimate \eqref{sp} follows from \eqref{pr}, Theorem \ref{a44} {\it iii)} and Proposition \ref{prop2.4}.

Let us now show part $(ii)$ of the theorem. We choose
$0>-\mu>\sup\{ \im \lambda ; \ \lambda \in ( {\rm Res} \, P ) \setminus \{0\}\}$ and the
integration path as in part $(i)$ of the theorem. We first suppose 
$e^{\varepsilon' t}>R \ell$ for some $\varepsilon'>0$ to be chosen later. Then
the estimate for $I_1$ can be replaced
by
\begin{eqnarray*}
\Vert I_1\Vert_{{\mathcal E}_{\ell}^{{\rm mod}}}\lesssim
e^{(C \mu \varepsilon'-\mu)t}\Vert u \Vert_{{\mathcal E}_{\ell}^{{\rm mod}}}.
\end{eqnarray*}
Let us now suppose $R \ell \ge e^{\varepsilon't}$. On the one hand we have
the inequality:
\begin{eqnarray*}
\Vert \chi e^{-itL}\chi\Vert_{{\mathcal L}({\mathcal E}^{\rm mod}_{\ell})}\lesssim 1 ,
\end{eqnarray*}
since the norms on ${\mathcal E}^{\rm mod}_{\ell}$ and on ${\mathcal E}_{\ell}$ are uniformly equivalent for $\ell \geq 1$. On the other hand by the hypotheses on $g$ we have 
\begin{eqnarray*}
1 \leq \frac{g(e^{2\varepsilon'' t})}{g( \ell ( \ell +1))}.
\end{eqnarray*}
for $\varepsilon'>\varepsilon''>0$ and $t$ large enough. It follows:
\begin{eqnarray*}
\Vert \chi e^{-itL}\chi\Vert_{{\mathcal L}({\mathcal
    E}^{\rm mod}_{\ell})} \lesssim \frac{g(e^{2\varepsilon'' t})}{g( \ell ( \ell +1))}.
\end{eqnarray*}
This finishes the proof of the theorem if we choose
$\varepsilon'$ sufficiently small and put $\varepsilon:=\min\{ 2 \varepsilon'',
  \mu- C \mu \varepsilon'\}$.

\begin{proof}[Proof of Remark \ref{a69} d)]
We note that for $u_{\ell} \in D (P_{\ell} )$, we have
\begin{align}
\< P_{\ell} u_{\ell} , u_{\ell} \> =& \big\< \big( r^{-1} D_{x} r^{2} D_{x} r^{-1} + V \ell ( \ell +1) \big) u_{\ell} , u_{\ell} \big\>  \nonumber  \\
\geq& \< V \ell ( \ell +1) \big) u_{\ell} , u_{\ell} \>,
\end{align}
and then
\begin{equation}  \label{a67}
\Vert \ell \sqrt{V} u_{\ell} \Vert^{2} \leq \Vert (P +1) u_{\ell} \Vert^{2}.
\end{equation}
Estimate \eqref{a68} can be written
\begin{equation*}
\Vert E_1(t)\Vert_{{\mathcal E}^{{\rm mod}}} \lesssim e^{-\mu t}\Vert\langle
-\Delta_{\omega}\rangle^M \chi_{0} u\Vert_{{\mathcal E}^{\rm mod}} ,
\end{equation*}
with $\chi_{0} \in C^{\infty}_{0} (\R)$ and $\chi_{0} \chi = \chi$. Let $\chi_{j} \in C^{\infty}_{0} (\R)$, $j =1 , \ldots , 2M$ with $\chi_{j+ 1} \chi_{j} = \chi_{j}$ for $j =0 , \ldots ,2M-1$. Remark that there exists $C>0$ such that $\sqrt{V} > 1/C$ on the support of $\chi_{2M}$. Using the radial decomposition $u = \sum_{\ell} u_{\ell}$, we get
\begin{align}
\Vert\< -\Delta_{\omega}\rangle^M \chi_{0} u\Vert_{{\mathcal E}^{\rm mod}} \lesssim& \sup_{\ell} \Vert \ell^{2M} \chi_{0} u_{\ell} \Vert_{{\mathcal E}^{\rm mod}}    \nonumber \\
\lesssim& \sup_{\ell} \Vert \ell^{2M-1} (P +1) \chi_{0} u_{\ell} \Vert_{{\mathcal E}^{\rm mod}} =\sup_{\ell} \Vert \ell^{2M-1} \chi_{1} (P +1) \chi_{0} u_{\ell} \Vert_{{\mathcal E}^{\rm mod}}  \nonumber \\
\lesssim& \sup_{\ell} \Vert \chi_{2M} (P+1) \chi_{2M-1} (P+1) \cdots \chi_{1} (P+1) \chi_{0} u_{\ell} \Vert_{{\mathcal E}^{\rm mod}}  \nonumber   \\
\lesssim& \Vert (P+1)^{2M} u \Vert_{{\mathcal E}^{\rm mod}} .
\end{align}
Finally, for the interpolation argument, we use the fact that
\begin{equation}
\Vert e^{- i t L_{\ell}} \Vert_{{\mathcal L}({\mathcal E}^{\rm mod}_{\ell};{\mathcal E}_{\ell}^{{\rm mod}})} \lesssim \Vert e^{- i t L_{\ell}} \Vert_{{\mathcal L}({\mathcal E}_{\ell};{\mathcal E}_{\ell})} = 1 ,
\end{equation}
for $\ell \geq 1$.
\end{proof}

\bibliographystyle{amsplain}
\providecommand{\bysame}{\leavevmode\hbox to3em{\hrulefill}\thinspace}
\providecommand{\MR}{\relax\ifhmode\unskip\space\fi MR }
\providecommand{\MRhref}[2]{%
  \href{http://www.ams.org/mathscinet-getitem?mr=#1}{#2}
}
\providecommand{\href}[2]{#2}

\end{document}